   \def\R{\mathbb{R}}
   \def\N{\mathbb{N}}
   \def\1{{\rm I\mskip -10.5mu 1}}
   \def\D{{\nabla}}
   \def\cC{{\cal C}}
   \def\cE{{\cal E}}
   \def\cM{{\cal M}}
   \def\meas{\mathop{\rm meas}\nolimits}
   \def\dist{\mathop{\rm dist}\nolimits}
   \def\loc{\mathop{\rm loc}\nolimits}
   \newcommand{\beq}{\begin{equation}}
   \newcommand{\eeq}{\end{equation}}
\newcommand{\la}{\lambda}
\newcommand{\wt}{\widetilde}
\newcommand{\rn}{\mathbb R^N}
\newcommand{\ir}{\int_{\mathbb R^N}}
\newcommand{\n}{\nabla}
\theoremstyle{definition}
\newtheorem{df}{Definition}[section]
\theoremstyle{remark}
\newtheorem{rem}[df]{Remark}
\theoremstyle{plain}
\newtheorem{prop}[df]{Proposition}
\newtheorem{lemma}[df]{Lemma}
\newtheorem{teo}[df]{Theorem}
 \newcommand{\sezione}[1]{\section{#1}\setcounter{equation}{0}}
\newcommand{\eps}{\varepsilon}
\begin{document}


\title{Normalized solutions to mass supercritical Schr\"odinger equations with negative potential}

\author{
Riccardo Molle
\\
{\small\it Dipartimento di Matematica, Universit\`a di Roma ``Tor Vergata''}
\\
{\small\it Via della Ricerca Scientifica n. 1, 00133 Roma, Italy}
\\ \\
Giuseppe Riey
\\
{\small\it Dipartimento di Matematica e Informatica, Universit\`a della Calabria}
\\
{\small\it Via P. Bucci 31B, 87036 Rende (CS), Italy}
\\ \\
Gianmaria Verzini
\\
{\small\it Dipartimento di Matematica, Politecnico di Milano}
\\
{\small\it Piazza Leonardo da Vinci, 32, 20133 Milano, Italy}
}

\date{}
\maketitle


\begin{abstract}
We study the  existence of positive  solutions with prescribed $L^2$-norm for
the mass supercritical Schr\"odinger equation
$$
-\Delta u+\lambda u-V(x)u=|u|^{p-2}u\qquad u\in H^1(\R^N),\quad \lambda\in \R,
$$
where $V\ge 0$,   $N\ge 1$ and $p\in\left(2+\frac 4 N,2^*\right)$, $2^*:=\frac{2N}{N-2}$ if $N\ge 3$ and $2^*:=+\infty$ if $N=1,2$.
We treat two cases. Firstly, under an explicit smallness assumption on $V$ and no condition on
the mass, we prove the existence of a mountain pass solution at positive energy level, and we
exclude the existence of solutions with negative energy. Secondly, requiring that the mass is smaller than some explicit bound, depending on $V$, and that $V$ is not too small in a suitable sense, we find two solutions: a local minimizer with negative energy,
and a mountain pass solution with positive energy. Moreover, a nonexistence result is proved.
\end{abstract}

\emph{Keywords:} Nonlinear Schr\"odinger equations, normalized solutions, positive solutions.

\bigskip

\emph{2020 Mathematics Subject Classification:} 35J50, 35J15, 35J60.

\sezione{Introduction}

We consider the problem
$$
\left\{
\begin{array}{l}
-\Delta u-V(x)u +\lambda u=u^{p-1}\smallskip\\
 \lambda\in\R,\quad u\in S_\rho,\quad u\ge 0,
 \end{array}
\right.
\eqno{(P_\rho)}
$$
where $N\ge 1$ and
$$
S_\rho=\left\{u\in H^1(\R^N)\ :\ \int_{\R^N}u^2dx=\rho^2\right\},\qquad \rho>0\,.
$$
Throughout the paper we assume
\begin{equation}\label{eq:base_ass}
2+\frac 4 N < p < 2^*
\qquad\text{ and }\qquad
V \ge 0,\ V\not\equiv0,
\end{equation}
up to further restrictions on some Lebesgue norm of the measurable potential $V$ (as usual $2^*:=\frac{2N}{N-2}$ if $N\ge 3$ and $2^*:=+\infty$ if $N=1,2$).

Problems of the form $(P_\rho)$ come from the study of standing waves for the nonlinear Schr\"odinger  equation
 \beq
 \label{S}
 iw_t+\Delta w+V(x)w=f(w), \qquad \mbox{ in } \R^N\times (0,\infty),
 \eeq
that is solutions of the form
 \beq
 \label{A}
 w(x,t)=e^{i\lambda t}u(x), \qquad (x,t)\in\R^N\times (0,\infty)
 \eeq
where $u$ is a real function. Here we consider the model case of a power nonlinearity $f(w)=|w|^{p-2}w$.

A lot of efforts have been done studying problem (\ref{S}), \eqref{A} for a fixed frequency $\lambda\in\R$; the literature in this direction is huge and we do not even make an attempt to
summarize it here (see  e.g. \cite{AM,Cmilan,S} for a survey on almost classical results, the recent papers \cite{DS,MP21} and references therein for new contributions).

On the other hand, a significant point of view consists in considering problem \eqref{S} when the mass of the particle is known, i.e.  the real
function $u$ in \eqref{A} is in $S_\rho$, $\rho>0$ fixed,
and $\lambda$ is an unknown of the problem. A natural approach to such a question is by variational methods. Indeed,   when $V\in L^r(\R^N)$, for some $r\in [N/2,+\infty]$,  $r\ge1$ ($r>1$ if $N=2$), solutions of $(P_\rho)$ can be found as critical points of the energy functional
\beq
\label{def_F}
F(u)=\frac12\int_{\R^N}(|\D u|^2-V(x)u^2)dx-\frac1p \int_{\R^N}|u|^{p}dx\qquad u\in H^1(\R^N),
\eeq
constrained on $S_\rho$.   Here $\lambda$ comes out as a Lagrange multiplier.
It turns out that the behavior of $F$ on  $S_\rho$ changes according to the value of $p$, in relation with the mass critical exponent $p=2+\frac4N$.

In the so called mass-subcritical case $p\in (2,2+\frac 4N)$, the problem can
be faced by minimization. In this respect, among many results, we refer the
reader to the classical paper \cite{PLL}, and to the recent paper \cite{IM}, where the existence of global minimizers is obtained for more general
nonlinearities and for negative potentials, that is $V\ge 0$ in our framework.
Several related results about the minimization of the nonlinear Schr\"odinger energy have been obtained
on metric graphs, also in the mass-critical case \cite{AST1,AST2,PSV}.

On the other hand, in the mass-supercritical case $p>2+\frac 4N$, the functional $F$ is unbounded
from below on $S_\rho$, as it is readily seen testing the functional on
$u_h(x):=h^{\frac N2}\bar u(hx)$, $h>0$, for a fixed $\bar u\in S_\rho$. Hence
the problem cannot be solved by (global) minimization arguments, and even the
appropriate definition of ground state is not clear. This problem has been faced in the seminal paper by Jeanjean \cite{Je}, for general nonlinearities in the autonomous case (i.e. $V$ constant): the key idea in \cite{Je} is to obtain a
mountain pass solution on $S_\rho$, by exploiting a natural constraint related to the Pohozaev
identity. After \cite{Je}, only recently more general autonomous equations and systems have been considered, also
refining and developing this initial strategy, see \cite{BdV,BJS,BS1,BJ,GJ,BS2,IT,S1,S2} and
references therein. On the other hand, mass supercritical equations on bounded domains and/or with (step well)
trapping potentials, i.e. potentials as in $(P_\rho)$ satisfying
\[
\lim_{|x|\to+\infty} -V(x) = +\infty
\]
(or even $-V\equiv +\infty$ outside some bounded $\Omega$), have been considered in
\cite{NTV1,NTV2,PV,NTV3} (see also \cite{BBJV} for the case of partial confinement). In this case, the trapping nature of the
potential provides enough compactness to cause the existence of solutions which are
local minimizers of $F$ on $S_\rho$ also in the mass-supercritical case, at least when
$\rho$ is sufficiently small. On the other hand, for non-trapping potentials very few
results are available: in particular, weakly repulsive potentials, i.e.
\begin{equation}\label{eq:wr}
-V(x) \ge \liminf_{|x|\to+\infty} -V(x) > -\infty,
\end{equation}
were considered in the very recent paper \cite{BMRV}.
When \eqref{eq:wr} holds, then the mountain pass structure
by Jeanjean is destroyed, but in \cite{BMRV} a new variational principle exploiting the Pohozaev identity
can be used to obtain existence of solutions with high Morse index. To conclude this discussion
about the previous literature, since the results we have discussed so far are all of variational
nature, let us mention that also topological methods have been applied, for instance
in \cite{CV,BZZ,PPVV}, also in connection with ergodic Mean Field Games systems.

In this paper we consider a class of mass-supercritical, and Sobolev sub-critical, problems
with weakly attractive potential, that is
\[
-V(x) \le \limsup_{|x|\to+\infty} -V(x) < +\infty
\]
(notice that, up to subtracting the constant $\rho^2\limsup_{|x|\to+\infty} -V(x)$ in $F$, this
corresponds to assuming \eqref{eq:base_ass}). Under suitable assumptions, we obtain two
families of solution. On the one hand we show that the mountain pass solution of Jeanjean
still exists also in this situation; on the other hand, we show that such mountain pass
structure also provides a local minimizer, as in \cite{PV,NTV3}, even though the potential
is not trapping at all. More precisely, we first show that, under an explicit smallness
assumption on $V$ and no condition on the mass, a mountain pass solution at positive energy level
exists; under the same smallness assumption we also show that no solution at negative energy
values can exist. Secondly, requiring that the mass is smaller than some explicit bound,
depending on $V$, and that $V$ is not too small in a suitable sense, we find two solutions: a
local minimizer with negative energy, and a mountain pass solution with positive energy. Notably,
this second result holds true also in dimensions $N=1,2$.

In order to state our results we define the auxiliary function
\begin{equation}\label{eq:defW}
W(x) = V(x)|x|.
\end{equation}
\begin{teo}
\label{T1}
Let $N\ge 3$ and \eqref{eq:base_ass} hold true.
There exists a positive explicit constant $L=L(N,p)$ such that if
\begin{equation}\label{eq:ass_VW_senza_rho}
\max\{\|V\|_{N/2},\|W \|_{N}\}<L
\end{equation}
then $(P_\rho)$ has a mountain pass solution for every $\rho>0$, at a positive energy level,
while no solution with negative energy exists.
\end{teo}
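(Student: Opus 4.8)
The plan is to study $F$ along the mass–preserving dilation $u_h(x):=h^{N/2}u(hx)$, $h>0$, which leaves $S_\rho$ invariant, and to read off both statements from the one–variable function $g(h):=F(u_h)$. Set $\gamma:=N(p-2)/2$, so that $\gamma>2$ precisely because $p>2+4/N$. A change of variables gives
\[
g(h)=\frac{h^2}{2}\|\nabla u\|_2^2-\frac12\ir V(x)u_h^2\,dx-\frac{h^{\gamma}}{p}\|u\|_p^p .
\]
Since $\gamma>2$ we have $g(h)\to-\infty$ as $h\to+\infty$, whereas $u_h\rightharpoonup0$ in $H^1(\rn)$ as $h\to0^+$ together with $V\in L^{N/2}$ gives $g(0^+)=0$. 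If $u$ solves $(P_\rho)$, then $h\mapsto F(u_h)$ is stationary at $h=1$ along $S_\rho$, which yields the scaling (Pohozaev) identity
\[
P(u):=g'(1)=\|\nabla u\|_2^2-\tfrac12 A'(1)-\tfrac{\gamma}{p}\|u\|_p^p=0 ,
\]
where $A'(1)=N\ir Vu^2\,dx+2\ir Vu\,(x\cdot\nabla u)\,dx$. Crucially, differentiating $\ir V(x)h^{N}u(hx)^2\,dx$ in this form makes only $\nabla u$ appear, not $\nabla V$, so the identity is available for a merely measurable $V$.

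For the nonexistence statement I would eliminate $\|u\|_p^p$ between $F(u)$ and $P(u)=0$; this removes both the unknown multiplier $\lambda$ and the nonlinearity and leaves the identity
\[
F(u)=\Big(\tfrac12-\tfrac1\gamma\Big)\|\nabla u\|_2^2-\tfrac12\ir Vu^2\,dx+\tfrac{1}{2\gamma}A'(1),
\]
valid at every solution. The strategy is then to absorb the two potential terms into an arbitrarily small fraction of $\|\nabla u\|_2^2$. Hölder and the Sobolev inequality give $\ir Vu^2\,dx\le S^{-1}\|V\|_{N/2}\|\nabla u\|_2^2$, while, writing $V=W/|x|$ and using the three–term Hölder inequality with exponents $(N,2^*,2)$ followed by Sobolev,
\[
\Big|\ir Vu\,(x\cdot\nabla u)\,dx\Big|\le\ir W\,|u|\,|\nabla u|\,dx\le\|W\|_N\,\|u\|_{2^*}\,\|\nabla u\|_2\le S^{-1/2}\|W\|_N\,\|\nabla u\|_2^2 .
\]
This is exactly where the smallness of $\|W\|_N$ is needed, and what motivates the auxiliary function $W=V|x|$. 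Inserting the two bounds yields $F(u)\ge\big[(\tfrac12-\tfrac1\gamma)-c_1\|V\|_{N/2}-c_2\|W\|_N\big]\|\nabla u\|_2^2$ with $c_1,c_2$ depending only on $N,p$; choosing $L=L(N,p)$ so small that the bracket is positive forces $F(u)>0$ at every solution (recall $u\not\equiv0$), which is more than the claimed nonexistence of negative–energy solutions.

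For the mountain pass solution I would first use the same Hölder–Sobolev estimate to make the quadratic part coercive: for $\|V\|_{N/2}<L$,
\[
\ir\big(|\nabla u|^2-Vu^2\big)\,dx\ge\big(1-S^{-1}\|V\|_{N/2}\big)\|\nabla u\|_2^2\ge c_0\|\nabla u\|_2^2 .
\]
Together with the Gagliardo–Nirenberg bound $\|u\|_p^p\le C\rho^{\,p-\gamma}\|\nabla u\|_2^{\gamma}$ and $\gamma>2$, this gives $F(u)\ge\frac{c_0}{2}\|\nabla u\|_2^2-C'\|\nabla u\|_2^{\gamma}$, hence a strictly positive barrier on some sphere $\{\|\nabla u\|_2=r\}\cap S_\rho$, while the curves $h\mapsto u_h$ reach negative energies; this is the mountain pass geometry of Jeanjean on $S_\rho$, defining a level $c(\rho)>0$ with the fibre characterization $c(\rho)=\inf_{u\in S_\rho}\max_{h>0}F(u_h)$. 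To obtain a Palais–Smale sequence that is moreover almost stationary for the dilation I would run Jeanjean's argument on the augmented functional $(u,s)\mapsto F(u_{e^s})$ over $S_\rho\times\mathbb{R}$ (the monotonicity trick), producing $(u_n)\subset S_\rho$ with $F(u_n)\to c(\rho)$, $dF|_{S_\rho}(u_n)\to0$ and $P(u_n)\to0$. Boundedness of $(u_n)$ in $H^1(\rn)$ is then immediate from the algebra above, since the combination $F(u_n)-\frac1\gamma P(u_n)$ bounds from above a positive multiple of $\|\nabla u_n\|_2^2$ once $L$ is small, the $L^2$–norm being fixed on $S_\rho$.

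The heart of the matter is compactness. As $V\in L^{N/2}$, the map $u\mapsto\ir Vu^2\,dx$ is weakly continuous on $H^1(\rn)$, so the potential is a compact perturbation and is invisible to mass leaking to infinity; consequently the only possible loss of compactness is dictated by the autonomous problem $V\equiv0$, whose positive mountain pass level I denote $c_0(\rho)$ and whose energy I denote $F_0$. Passing to a weak limit $u_n\rightharpoonup u$ and splitting à la Brezis–Lieb, any bubble escaping to infinity would carry at least the energy $c_0(\rho)$. On the other hand, taking as competitor the autonomous ground state $u^\ast\in S_\rho$, which is everywhere positive, and using $\ir V(u^\ast_h)^2\,dx>0$ for all $h$ together with $V\ge0$, $V\not\equiv0$, the fibre characterization gives
\[
c(\rho)\le\max_{h>0}F(u^\ast_h)<\max_{h>0}F_0(u^\ast_h)=c_0(\rho).
\]
This strict gap $c(\rho)<c_0(\rho)$ excludes dichotomy and yields $u_n\to u$ strongly in $H^1(\rn)$, so that $u$ solves $(P_\rho)$ at the positive level $c(\rho)$. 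Carrying out the whole construction within the nonnegative functions (replacing $u$ by $|u|$, which does not increase $F$) and applying the strong maximum principle gives the desired positive solution. In my view the genuinely delicate steps are establishing the strict inequality $c(\rho)<c_0(\rho)$ and ruling out escaping mass; the remaining arguments are by now standard in the theory of normalized solutions.
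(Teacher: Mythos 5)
Your overall architecture --- the Pohozaev identity along the dilation fibre to prove $F\ge0$ at every solution, the mountain pass geometry from Gagliardo--Nirenberg plus coercivity of the quadratic form under \eqref{ipotesi V1}, the augmented functional $(u,s)\mapsto F(u_{e^s})$ to produce a Palais--Smale sequence with $P(u_n)\to0$, and the strict gap $c(\rho)<c_0(\rho)$ obtained by testing with the dilation path of the autonomous ground state --- coincides with the paper's proof. In particular your nonexistence argument is complete and is exactly Lemma \ref{L>} (your coefficients $\tfrac{4-p}{2(p-2)}$ and $\tfrac{2}{N(p-2)}$ match \eqref{Fv}).

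There is, however, a genuine gap in the compactness step, and it is precisely the point the paper identifies as the main difficulty: you never show that the limit $\lambda$ of the Lagrange multipliers $\lambda_n=-DF(u_n)[u_n]/\rho^2$ is \emph{positive}. Your sequence is a free Palais--Smale sequence for $I_\lambda$ only after passing through this multiplier, and the Benci--Cerami splitting into $v+\sum_j w^j(\cdot-y_n^j)$ with bubbles solving $-\Delta w+\lambda w=|w|^{p-2}w$ is valid only for $\lambda>0$: if $\lambda\le0$, a bounded PS sequence with $\|v_n\|_p\to0$ need not converge in $H^1$ (the relation $\|\nabla v_n\|_2^2+\lambda\|v_n\|_2^2=\|v_n\|_p^p+o(1)$ no longer forces $v_n\to0$), and the limit equation has no nontrivial $H^1$ solution, so the dichotomy ``strong convergence or escaping bubbles of energy $\ge m_\rho$'' breaks down. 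Saying that the potential is a compact perturbation ``invisible to mass leaking to infinity'' is correct but does not remove this obstruction, which is already present in the autonomous problem. Proving $\lambda>0$ is not automatic: in the paper it requires combining the three relations \eqref{PS-v_n}--\eqref{Pohozaev-new} with the lower bound $m_{V,\rho}\ge Mm_\rho$ of Remark \ref{mv-mr lemma} and the quantitative assumption \eqref{ipotesi V3}, leading to the computation \eqref{lambda>0}; this is the heart of Step 3 of Proposition \ref{bounded PS}. A secondary, more minor omission: in the final energy balance you need $F(u)\ge0$ for the weak limit $u$, which is a solution with mass possibly strictly smaller than $\rho$; this does follow from your own (mass-independent) nonexistence computation, but it must be invoked, since $c(\rho)=F(u)+\sum_jF_\infty(w^j)<c_0(\rho)$ together with $F_\infty(w^j)\ge m_\rho$ yields a contradiction only once $F(u)\ge0$ is known.
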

\begin{teo}\label{Tmin}
Let $N\ge1$ and \eqref{eq:base_ass} hold true, and let  $r\in\left(\max(1,\frac{N}{2}),+\infty\right]$,
$s\in\left(\max(2, {N}),+\infty\right]$.
\begin{enumerate}
\item There exist positive explicit constants $\sigma=\sigma(N,p,r)$ and $K=K(N,p,r)$ such that, if
\begin{eqnarray}
&&  \text{either $r<+\infty$, or $r=+\infty$ and }\lim_{|x|\to+\infty}V(x)=0,\\
\label{eq:main_ass_MPG} && \|V\|_{r}\cdot \rho^\sigma  < K\text{ and}\\
\label{eq:V_neg_spect} && \text{there exists }\varphi\in S_\rho :
\int_{\R^N}(|\D \varphi|^2-V(x)\varphi^2)\,dx \le0,
\end{eqnarray}
then $(P_\rho)$ has a solution, which corresponds to a local minimizer of $F$ on $S_\rho$ with negative energy.
\item There exist positive explicit constants $\sigma_i=\sigma_i(N,p,r)$, $\bar\sigma_i=\bar\sigma_i(N,p,s)$, $i=1,2$,
and $\tilde L=\tilde L(N,p,r,s)$ such that, if
\begin{equation}\label{eq:ass_VW_con_rho}
\max\{\|V\|_{r}\cdot\rho^{\sigma_i},\|W \|_{s}\cdot\rho^{\bar\sigma_i}\}<\tilde L,\qquad i=1,2,
\end{equation}
then $(P_\rho)$ has a mountain pass solution at a positive energy level.
\end{enumerate}
\end{teo}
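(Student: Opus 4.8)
The plan is to realize the two items through the two complementary variational mechanisms announced above: a constrained minimization producing a local minimizer for item~1, in the spirit of \cite{PV,NTV3}, and a mountain pass scheme à la Jeanjean \cite{Je} for item~2. Both rest on a single Gagliardo--Nirenberg estimate. Since $F$ depends on $u$ only through $u^2$ and $|u|^p$, replacing $u$ by $|u|$ lowers neither $F$ nor the constraint, so I may work with nonnegative functions and recover $u\ge0$ (hence $u>0$ by the strong maximum principle). For $u\in S_\rho$, Hölder with exponents $r,r'$ followed by Gagliardo--Nirenberg yields
\beq
F(u)\ \ge\ \frac12\|\n u\|_2^2-\frac{c_1}{2}\,\|V\|_r\,\rho^{2-\frac{N}{r}}\,\|\n u\|_2^{\,N/r}-\frac{c_2}{p}\,\rho^{\,p(1-\ga)}\,\|\n u\|_2^{\,p\ga}\ =:\ g\big(\|\n u\|_2\big),
\eeq
with $\ga=\frac{N(p-2)}{2p}$; the decisive algebraic fact is $\frac{N}{r}<2<p\ga$, since $r>\max(1,N/2)$ and $p$ is mass supercritical. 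Thus $g$ starts below zero, rises to a strictly positive hump and finally decreases to $-\infty$, the height and location of the hump being governed precisely by the scale-invariant combination $\|V\|_r\,\rho^{\sigma}$.

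\textbf{Item 1 (local minimizer).} I would fix $R>0$ just below the top of the hump and set $A_R=\{u\in S_\rho:\|\n u\|_2^2<R\}$. The estimate shows $F$ is bounded below on $A_R$ and, choosing $R$ so that $g(\sqrt R)>0$ — possible exactly when $\|V\|_r\rho^\sigma<K$ — that $F\ge g(\sqrt R)>0$ on $\partial A_R=\{\|\n u\|_2^2=R\}$. On the other hand, any $\vf$ furnished by \eqref{eq:V_neg_spect} automatically has small gradient: from $\|\n\vf\|_2^2\le\ir V\vf^2\le c_1\|V\|_r\|\n\vf\|_2^{N/r}\rho^{2-N/r}$ one gets $\|\n\vf\|_2\lesssim\|V\|_r^{1/(2-N/r)}\rho$, which under \eqref{eq:main_ass_MPG} places $\vf$ inside $A_R$, where $F(\vf)\le-\tfrac1p\|\vf\|_p^p<0$. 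Hence $\inf_{A_R}F<0\le\inf_{\partial A_R}F$ and the infimum is attained in the interior. Compactness of a minimizing sequence $u_n$ is the delicate point: $u_n$ is bounded in $H^1$, so $u_n\rightharpoonup u$, and the potential term is weakly continuous because $V\in L^r$ decays (this is where the alternative $r<+\infty$ / $\lim V=0$ is used). A Brezis--Lieb splitting gives $F(u_n)=F(u)+\tilde F(u_n-u)+o(1)$ with $\tilde F$ the autonomous ($V\equiv0$) functional, whose energy in the barrier region is nonnegative; since the level is strictly negative, no mass can leak to the problem at infinity, forcing $u_n\to u$. The minimizer solves $(P_\rho)$ with its Lagrange multiplier $\la$.

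\textbf{Item 2 (mountain pass).} Here I would work on the fiber maps $s\mapsto F(u_s)$ along the dilations $u_s(x)=e^{Ns/2}u(e^sx)$. Differentiating at $s=0$ produces the Pohozaev functional
\beq
Q(u)=\|\n u\|_2^2+\tfrac12\ir (x\cdot\n V)\,u^2\,dx-\ga\,\|u\|_p^p,
\eeq
and an integration by parts, $\ir (x\cdot\n V)u^2=-N\ir Vu^2-2\ir V(x\cdot\n u)u$, shows that the last term is bounded by $2\ir|V|\,|x|\,|u|\,|\n u|\,dx$, i.e. by $\|W\|_s$ with $W=V|x|$ as in \eqref{eq:defW}; this is exactly why the second norm enters \eqref{eq:ass_VW_con_rho}. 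The two inequalities $i=1,2$ guarantee, uniformly on $S_\rho$, that each fiber map inherits the profile of $g$ — a positive barrier separating the small- and large-dilation regimes — so that the min-max level $c$ is well defined and strictly positive. A bounded Palais--Smale sequence at level $c$ is then produced by Jeanjean's monotonicity trick \cite{Je}, and the smallness of $\|V\|_r\rho^{\sigma_i}$ and $\|W\|_s\rho^{\bar\sigma_i}$ keeps $c$ below the autonomous mountain pass level $c_0(\rho)$; comparison with that limit problem, together with the weak continuity of the potential term, rules out vanishing and dichotomy and yields a nonnegative critical point at positive energy.

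\textbf{Main obstacle.} The genuine difficulty lies in the compactness analysis and in converting it into the explicit constants $K,\sigma,\tilde L,\sigma_i,\bar\sigma_i$. Concretely, the hard part is to establish the two strict energy inequalities — negativity of $\inf_{A_R}F$ for item~1 and $0<c<c_0(\rho)$ for item~2 — \emph{quantitatively}, tracking how they follow from $\|V\|_r\rho^\sigma<K$ and from \eqref{eq:ass_VW_con_rho}, and to control the non-autonomous Pohozaev term through $\|W\|_s$ while still securing a bounded Palais--Smale sequence on the constraint $S_\rho$. Once these levels are pinned down, the concentration-compactness step closes the argument in a by-now standard way.
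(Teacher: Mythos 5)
Your overall architecture (a local minimization inside a gradient ball for item 1, a mountain pass level compared with the autonomous level for item 2) matches the paper's, and the Gagliardo--Nirenberg barrier estimate is set up correctly. However, both compactness arguments, which you yourself identify as the crux, contain genuine gaps rather than merely deferred computations.

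For item 1, the assertion that ``since the level is strictly negative, no mass can leak to the problem at infinity'' does not follow from the Brezis--Lieb splitting. The dangerous scenario is vanishing: a portion of the $L^2$ mass spreads out at infinity with $\|\nabla(u_n-u)\|_2\to0$ and $\|u_n-u\|_p\to0$, hence at \emph{zero} energy cost, which is perfectly compatible with a strictly negative level; the weak limit $u$ then lies in $S_\mu$ with $\mu<\rho$ and you have solved the wrong problem. Excluding this requires an extra ingredient: the paper proves the monotonicity $c_{V,\alpha}\ge c_{V,\rho}$ for $\alpha\le\rho$ (Lemma \ref{lem:ineq_c}, via the scaling $u\mapsto tu$, $t\ge1$, which strictly decreases negative energies while a barrier argument keeps $tu$ inside the gradient ball), and, independently, upgrades the minimizing sequence to a Palais--Smale sequence whose Lagrange multipliers satisfy $\lambda\ge-2c_{V,\rho}/\rho^2>0$, so that the Benci--Cerami splitting Lemma \ref{splitting-lemma} for the free functional $I_\lambda$ applies and every escaping profile must carry a strictly positive autonomous action $F_\infty(w^j)\ge m_{\alpha_j}>0$. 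Neither the sub-additivity lemma nor the positivity of the multiplier appears in your sketch, and without at least one of them the argument does not close. (Your observation that $\varphi$ from \eqref{eq:V_neg_spect} has controlled gradient is a workable alternative to the paper's scaling argument for placing $\varphi$ in the ball, provided the exponents are checked against \eqref{eq:main_ass_MPG}.)

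For item 2, two essential steps are missing. First, the positivity of the limit Lagrange multiplier is again indispensable before any splitting lemma can be invoked, and here it does not come for free from a negative level: the paper obtains it through an explicit chain of inequalities combining the approximate Pohozaev identity with \eqref{eq:ass_VW_con_rho} and the comparison $m_{V,\rho}\ge\widetilde M m_\rho$. Second, and more seriously, ruling out dichotomy is \emph{not} ``by-now standard'' in this setting, because under the hypotheses of item 2 the weak limit $v$ may have negative energy (indeed item 1 produces exactly such solutions), so one cannot simply discard $F(v)$ as in the proof of Theorem \ref{T1}. The paper's resolution is the quantitative lower bound \eqref{eq:claim_F(v)}, $F(v)\ge-\theta m_\rho(\|v\|_2/\rho)^2$ with $\theta=\frac{2N-p(N-2)}{N(p-2)-4}$, derived from the Pohozaev identity for $v$ and the smallness of $\|V\|_r$, $\|W\|_s$; combined with $F_\infty(w^j)\ge m_{\alpha_j}=m_\rho(\alpha_j/\rho)^{-2\theta}$ and the elementary convexity estimate \eqref{1558}, it forces $F(v)+\sum_jF_\infty(w^j)\ge m_\rho$, contradicting $m_{V,\rho}<m_\rho$. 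This is the key idea of the proof and is absent from your proposal. A minor point: the strict inequality $m_{V,\rho}<m_\rho$ already follows from $V\ge0$, $V\not\equiv0$ by testing with the autonomous optimal path; the smallness assumptions are needed not for this, but for the boundedness of the Palais--Smale sequence, the sign of $\lambda$, and the bound on $F(v)$ just described.
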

\begin{rem}
We point out that our results are not perturbative, indeed all the constants in the above
theorems can be made explicit with respect to the structural parameters $N$, $p$, $r$, $s$,
 see e.g.
\eqref{ipotesi V1}--\eqref{ipotesi V4}, or \eqref{eq:L1}--\eqref{eq:L3}.
\end{rem}
\begin{rem}
Under the assumption of Theorem \ref{Tmin}, it is standard to prove that the non-empty set
of local minimizers is (conditionally) orbitally stable for the associated evolution equation.
\end{rem}
\begin{rem}\label{rem:bottom}
It is well known that a sufficient condition for \eqref{eq:V_neg_spect} to hold true
is that
\begin{equation}\label{eq:suff_cond_bottom}
\inf_{B_R} V \ge \eta,\quad\text{with }
\begin{cases}
\eta>0,\ R>0 & N=1,2\smallskip\\
R^2\eta>N(N-2) & N\ge3.
\end{cases}
\end{equation}
In particular, no condition is required in dimension $N=1,2$ as long as $V\ge0$, $V\not\equiv0$.
See also \cite[Theorem B (i)]{IM} and Lemma \ref{lem:negativebottom} for further details.
\end{rem}
\begin{rem}
Notice that, for every $V$ (either satisfying \eqref{eq:V_neg_spect} or not),
it is always possible to choose $\rho$ sufficiently small so that
both \eqref{eq:main_ass_MPG} and \eqref{eq:ass_VW_con_rho} hold true.
Moreover, if $N\ge 3$, assumption   \eqref{eq:suff_cond_bottom} just requires
\[
\|V\|_r^r \ge \eta^r |B_R| \ge \eta^{r-\frac{N}{2}} \frac{\omega_N}{N} \left[N(N-2)
\right]^{\frac{N}{2}}.
\]
In particular, since $r>\frac{N}{2}$, one can exhibit potentials with arbitrarily small
$L^r$ norm fulfilling the assumptions of Theorem \ref{Tmin} (with sufficiently small $\rho$ and
large $R$).
\end{rem}
\begin{rem}
The mountain pass geometry in Theorems \ref{T1} and \ref{Tmin} is essentially
the same, therefore most probably the mountain pass solutions coincide. As
a matter of fact, the explicit dependence on $r,s$ show that $\sigma_1=\sigma_2=0$
if $r=N/2$ and $s=N$, so that in this case \eqref{eq:ass_VW_con_rho} and
\eqref{eq:ass_VW_senza_rho} coincide. On the other hand, if $r=N/2$ then also
$\sigma =0$, and also \eqref{eq:main_ass_MPG} reduces to \eqref{eq:ass_VW_senza_rho}.
Nonetheless, in this case \eqref{eq:ass_VW_senza_rho} and \eqref{eq:V_neg_spect}
are not compatible, so that the minimizer with negative energy does not exist.
\end{rem}
\begin{rem}
In principle, for our results we only need $V$, $W$ to belong to suitable Lebesgue spaces.
On the other hand, if we also have $V\in C^{0,\alpha}_{\loc} (\R^N)$
then all the solutions we find are classical and, by the strong maximum principle,
they are strictly positive in $\R^N$.
\end{rem}
\begin{rem}
The main difficulty to prove the existence of the mountain pass solution is the analysis of the behavior of a bounded Palais-Smale sequence related to the mountain pass level.
To overcome this difficulty, we prove that the Lagrange multiplier associated our PS-sequence is positive and then we use an almost classical
splitting result in the unconstrained sub-critical case (see \cite{BC}).
\end{rem}

\vspace{2mm}

Observe that in \cite{BMRV} an existence result is proved when the potential $V$ in $(P_\rho)$ is negative, while in the present paper we find solutions for nonnegative $V$.
In the following proposition we give another contribution in the study of the problem with a nonexistence result, analogous to \cite[Theorem 1.1]{CM}.
\begin{prop}
\label{TNE}
Let $p\in (2,2^*)$, $V\in   L^\infty(\R^N)$  and assume that there exists  $\frac{\partial V}{\partial\nu}\in L^s(\R^N)$ for some $\nu\in\R^N\setminus\{0\}$ and $s\in [\max(1,\frac{N}{2}) ,+\infty]$. If $\frac{\partial V}{\partial\nu}\ge 0$ and $\frac{\partial V}{\partial\nu}\not\equiv 0$, then problem
 \beq
 \label{Rs}
 -\Delta u+\lambda u-V(x)u=|u|^{p-2}u\qquad u\in S_\rho, \quad \lambda\in\R
\eeq
 has no solutions in $C^1(\R^N)\cap W^{2,2}(\R^N)$.
\end{prop}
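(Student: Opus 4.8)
The plan is to derive a Pohozaev--Rellich type identity by testing equation \eqref{Rs} against the directional derivative $\partial_\nu u := \nu\cdot\n u$, and to show that it forces $\ir \frac{\partial V}{\partial\nu}\,u^2\,dx=0$; since $\frac{\partial V}{\partial\nu}\ge0$ and $\frac{\partial V}{\partial\nu}\not\equiv0$, this contradicts the fact that $u$ is a nontrivial $W^{2,2}$ solution. The underlying heuristic is that every term of $F$ except the potential one is invariant under the translations $u\mapsto u(\cdot-t\nu)$, which preserve $S_\rho$; hence, along such a one-parameter family, the only surviving contribution to the (vanishing) derivative of the constrained functional at a critical point comes from $\ir \frac{\partial V}{\partial\nu}\,u^2\,dx$.

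First I would make this rigorous by multiplying \eqref{Rs} by $\partial_\nu u$ and integrating over a ball $B_R$. Integrating by parts term by term, and using that $u\,\partial_\nu u=\tfrac12\partial_\nu(u^2)$, that $|u|^{p-2}u\,\partial_\nu u=\tfrac1p\partial_\nu(|u|^p)$, and that $\n u\cdot\n(\partial_\nu u)=\tfrac12\partial_\nu|\n u|^2$ (since $\partial_\nu$ has constant coefficients), each of the kinetic, mass and nonlinear terms reduces to a pure boundary integral on $\partial B_R$, while the potential term, after one further integration by parts, produces up to boundary terms a nonzero multiple of $\ir \frac{\partial V}{\partial\nu}\,u^2\,dx$. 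Collecting everything and letting $R\to+\infty$ along a suitable sequence yields the claimed identity.

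The required integrability is guaranteed by the hypotheses: $V\in L^\infty(\rn)$ and $u\in H^1(\rn)$ give $Vu\,\partial_\nu u\in L^1$, while Hölder's inequality together with $s\ge\max(1,N/2)$ and the Sobolev embedding $H^1(\rn)\hookrightarrow L^{2^*}(\rn)$ ensure $\frac{\partial V}{\partial\nu}u^2\in L^1(\rn)$, since then $2s'\le 2^*$. The main obstacle is the rigorous vanishing of the boundary integrals: one must verify that, since $u\in W^{2,2}(\rn)\cap C^1(\rn)$ and $u\in L^p(\rn)$, the surface quantities $\int_{\partial B_R}\bigl(|\n u|^2+u^2+|u|^p\bigr)\,d\sigma$ are integrable in $R$ over $(0,+\infty)$, so that they tend to $0$ along some sequence $R_n\to+\infty$. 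This is the standard but delicate point in making Pohozaev-type identities rigorous on the whole space under mere $W^{2,2}$ regularity.

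Finally, from $\ir \frac{\partial V}{\partial\nu}\,u^2\,dx=0$ together with $\frac{\partial V}{\partial\nu}\ge0$ and $\frac{\partial V}{\partial\nu}\not\equiv0$, I conclude that $u$ vanishes almost everywhere on the set $\bigl\{\frac{\partial V}{\partial\nu}>0\bigr\}$, which has positive measure. Writing \eqref{Rs} as $-\Delta u=c(x)u$ with $c:=V+|u|^{p-2}-\la\in L^\infty_{\loc}(\rn)$ (here $u\in C^1(\rn)$ makes $|u|^{p-2}$ locally bounded), the strong unique continuation property for second-order elliptic equations with locally bounded potential forces $u\equiv0$, contradicting $\|u\|_{L^2(\rn)}=\rho>0$. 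This completes the argument.
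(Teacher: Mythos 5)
Your proposal is correct and follows essentially the same route as the paper: the key identity $\int_{\R^N}\frac{\partial V}{\partial\nu}\,u^2\,dx=0$ (which the paper obtains more softly by differentiating the smooth curve $t\mapsto I_\lambda(u(\cdot+t\nu))$ at $t=0$, so no boundary terms on $\partial B_R$ ever appear), followed by the observation that a nontrivial solution of $-\Delta u+c(x)u=0$ with $c\in L^\infty_{\loc}(\R^N)$ cannot vanish on a set of positive measure. For that last step the accurate tool is unique continuation from sets of positive measure, i.e.\ the nodal-set result of Hardt--Simon cited in the paper, rather than the pointwise strong unique continuation property you name, but the content and conclusion are the same.
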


\begin{rem}
It is worthwhile noticing that our results are compatible with Proposition \ref{TNE}. 
We have only to remark that, if we consider  $r=+\infty$ in Point 1 of Theorem \ref{Tmin}, 
then $V$ is explicitly required to vanish at infinity (that is not allowed by Proposition 
\ref{TNE}) and that also in the case $r=s=+\infty$ in Point 2 the potential $V$ vanishes at 
infinity, because
\beq
\label{1823}
|V(x)|\le\frac{ \|W\|_\infty}{|x|}.
\eeq
\end{rem}

Another result that could have some interest, for example in stating constraints to work with, is the following necessary condition for critical points of $F$ on $S_\rho$.
\begin{prop}
\label{CNC}
Let $p\in (2,2^*)$ and $V\in  L^r(\R^N)$ for some $r\in [\max(1,\frac{N}{2}),+\infty]$.
If $u\in S_\rho\cap W^{2,2}(\R^N)$ is a critical point for $F$ constrained on $S_\rho$, then
$$
\int_{\R^N}V(x)\,u\,\frac{\partial u}{\partial\nu}\, dx=0
$$
for every direction $\nu\in \R^N\setminus\{0\}$.
\end{prop}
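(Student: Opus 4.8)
The plan is to derive the identity from a Pohozaev-type argument, exploiting invariance under translations in the direction $\nu$. Let me unpack the claim: $u$ solves $-\Delta u + \lambda u - V(x)u = |u|^{p-2}u$ for some Lagrange multiplier $\lambda$. The quantity $\int_{\R^N} V\, u\, \partial_\nu u\,dx$ is, up to a factor, the directional derivative of the potential energy $\int V u^2$ along the translation flow. The natural strategy is to test the equation against $\partial_\nu u$ and integrate, since translations $u \mapsto u(\cdot - t\nu)$ preserve the $L^2$-norm (keeping us on $S_\rho$) and the kinetic and nonlinear parts of $F$, leaving only the potential term to vary.

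Concretely, first I would fix $\nu$ and consider the one-parameter family $u_t(x) := u(x - t\nu)$, which lies in $S_\rho$ for all $t$ because the Lebesgue measure is translation invariant. Since $u$ is a constrained critical point, $\frac{d}{dt}F(u_t)\big|_{t=0} = \lambda \frac{d}{dt}\left(\tfrac12\int u_t^2\right)\big|_{t=0}$, and the right-hand side vanishes as the mass is constant in $t$. On the left, the Dirichlet energy $\int|\nabla u_t|^2$ and the nonlinear term $\int|u_t|^p$ are both translation invariant, hence contribute zero to the $t$-derivative. The only surviving term is $-\tfrac12\frac{d}{dt}\int_{\R^N} V(x) u_t^2\,dx\big|_{t=0}$. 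Differentiating under the integral (justified by $u\in W^{2,2}$, $V\in L^r$, and dominated convergence) gives $\frac{d}{dt}\int V\, u_t^2 = \int V\cdot 2 u_t\,(-\partial_\nu u_t) = -2\int V\, u_t\,\partial_\nu u_t$, so at $t=0$ the vanishing derivative yields exactly $\int_{\R^N} V\, u\,\partial_\nu u\,dx = 0$.

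The main obstacle is rigorously justifying the differentiation under the integral sign and the claim that each energy piece is $C^1$ in $t$ with the expected derivative, under the stated regularity $u\in S_\rho\cap W^{2,2}(\R^N)$ and $V\in L^r$. For the potential term one must control $\int V\, u\,\partial_\nu u$, which by H\"older belongs to $L^1$ once $u\,\partial_\nu u \in L^{r'}$; the condition $r\ge\max(1,N/2)$ together with the Sobolev and interpolation embeddings for $u\in H^1\cap W^{2,2}$ should guarantee this, and a difference-quotient argument combined with the continuity of translations in the relevant $L^q$ spaces delivers the $t$-derivative. The kinetic and nonlinear terms are strictly translation invariant, so their $t$-invariance is exact rather than merely differentiable, which removes any subtlety there. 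I would therefore organize the proof as: (i) verify $\int V\,u\,\partial_\nu u$ is well defined and finite via H\"older; (ii) establish that $t\mapsto F(u_t)$ is differentiable with the computed derivative; (iii) use criticality on $S_\rho$ and mass invariance to conclude the derivative vanishes, which is the desired identity.
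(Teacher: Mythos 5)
Your proposal is correct and follows essentially the same route as the paper: the authors also differentiate the energy along the translation curve $t\mapsto u(\cdot+t\nu)$, observing that only the potential term fails to be translation invariant, and conclude from criticality (they phrase it via the free functional $I_\lambda$, of which $u$ is an unconstrained critical point, rather than via $F$ on $S_\rho$, but the computation is identical since the curve preserves the mass). Your points (i)--(iii) on justifying the differentiation are exactly what the paper compresses into the remark that $t\mapsto u(\cdot+t\nu)$ is a smooth curve in $H^1(\R^N)$, which is where the hypothesis $u\in W^{2,2}(\R^N)$ enters.
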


Before concluding this introduction, we want to observe that in exterior domains or in some domains with unbounded boundary the splitting Lemma \ref{splitting-lemma} holds true again. Moreover, one can analyze the displacement of the potential and the mass $\rho$ in order to recover the mountain pass geometry. Hence, it would be interesting to investigate whether the mountain pass solution exists, taking into account that in our approach the Pohozaev identity plays a crucial role. We refer the reader to \cite{MM06,M04,MP98} for existence results in this framework, in the unconstrained case.

The paper is organized as follows: in Section 2 we introduce the main notations and some preliminary results,  and prove Proposition  \ref{TNE}, in Section 3 we prove Theorem \ref{T1} while Section 4 is devoted to the proof of Theorem \ref{Tmin}.

\section{Notation and preliminary results}

For $p\in [1,+\infty]$, we denote by $L^p(\rn)$ the Lebesgue's space
with norm $\|\cdot\|_p$ and by
$H^1(\rn)$, $D^{1,2}(\R^N)$  the usual Sobolev spaces with the norm $\|\cdot\|_H$ and $\|\D u\|_2$,
respectively;
$S$ will denote the Sobolev constant, namely:
\begin{equation}\label{cost sobolev}
S=\inf_{u\in H^1(\R^N)\setminus\{0\}}\frac{\|u\|^2_H}{\|u\|^2_{2^*}}=\inf_{u\in D^{1,2}(\R^N)\setminus\{0\}}\frac{\|\D u\|^2_2}{\|u\|^2_{2^*}};
\end{equation}
$c,C$ are constants which may vary from line to line (structural constants will depend on $N$, $p$, $r$, $s$, while
the dependence on $V$, $W$ and $\rho$ will be made explicit whenever useful).
We fix the constant
\begin{equation}\label{gamma}
\gamma=\frac N 2(p-2)>2.
\end{equation}

We recall that for every $\rho>0$
there exists a unique  solution $Z_\rho$, up to translations, for the limit problem
 \begin{equation}\label{burbuja-rho}
\left\{
\begin{aligned}
  &-\Delta Z_\rho+\la_\rho Z_\rho=Z_\rho^{p-1}\\
  &Z_\rho\in S_\rho,\ Z_\rho>0 ,
\end{aligned}
\right.
\end{equation}
with $\lambda_\rho>0$.
The function $Z_\rho$ is radial and it is a mountain pass
critical point for
$$
F_\infty (u)=\frac12\int_{\R^N}|\D u|^2dx-\frac1p\int_{\R^N}|u|^p dx\qquad u\in H^1(\R^N)
$$
constrained on $S_\rho$.

By scaling, $Z_\rho$ can be expressed in terms of the unique positive solution  $U\in H^1(\R^N)$  of
\[
\left\{
\begin{aligned}
 & -\Delta U+U = U^{p-1}\\
 & U>0,\ U(0) = \|U\|_\infty.
\end{aligned}
\right.
\]
More precisely, setting $\rho_0=\|U\|_2$, for $\rho>0$ we define:
$$
\mu_{ \rho}=\left(\frac{\rho}{\rho_0}\right)^{\frac{2(p-2)}{N (p - 2) - 4}}\,,
$$
 then in \eqref{burbuja-rho} we have
 \beq
\label{Zrho}
Z_\rho(x)=\mu_{  \rho}^{-\frac{2}{p-2}}U(x/\mu_{ \rho})\,,\qquad
\la_\rho=\mu_{\rho}^{-2}=\left(\frac{\rho_0}{\rho}\right)^{\frac{4(p-2)}{N(p-2)-4}}>0\,.
\eeq
Setting $m_\rho=F_\infty (Z_\rho)$, so that $m_{\rho_0}=F_\infty(U)$, observe that
\begin{equation}\label{eq:poho}
  m_\rho=m_{\rho_0}\left(\frac{\rho_0}{\rho}\right)^{ \frac{4N-2p(N-2)}{N(p-2)-4}} =
  \frac{N(p-2)-4}{4N-2p(N-2)}\left(\frac{\rho_0}{\rho}\right)^{\frac{4(p-2)}{N(p-2)-4}}\,\rho^2 =
  \frac{N(p-2)-4}{4N-2p(N-2)}\lambda_\rho\,\rho^2
  .
\end{equation}
In the following lemma we recall the well-known
Gagliardo-Nirenberg inequality.
\begin{lemma}
For every $N\ge3$, $2\le q\le 2^*$ there exists $G_q>0$, depending on $N$ and $q$, such that
\begin{equation}\label{gagliardo_q}
\|u\|_q\leq
G_q \|u\|_2^{1-\frac{N(q-2)}{2q}}\|\n u\|_2^{\frac{N(q-2)}{2q}}
\end{equation}
for every $u\in H^1(\rn)$. The inequality holds true also in $N=1,2$, for every $2\le q<+\infty$.

In particular, if $q=p$:
\begin{equation}\label{gagliardo}
\|u\|_p\leq
G \|u\|_2^{1-\frac \gamma p}\|\n u\|_2^\frac \gamma p
\end{equation}
where  $\gamma$ is defined in \eqref{gamma} and $G=G_p$.
\end{lemma}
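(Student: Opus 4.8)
The plan is to derive \eqref{gagliardo_q} by interpolating between $L^2$ and the Sobolev space, letting the scaling of the problem fix the exponent. I first recall the Sobolev inequality encoded in \eqref{cost sobolev}: for $N\ge3$ and every $u\in D^{1,2}(\rn)$ one has $\|u\|_{2^*}\le S^{-1/2}\|\n u\|_2$. For an exponent $2\le q\le 2^*$ I then write $q$ as an interpolation exponent between $2$ and $2^*$, choosing $\alpha\in[0,1]$ so that $\frac1q=\frac{1-\alpha}2+\frac{\alpha}{2^*}$; a direct computation gives $\alpha=\frac{N(q-2)}{2q}$. H\"older's inequality in interpolated form yields $\|u\|_q\le\|u\|_2^{1-\alpha}\|u\|_{2^*}^{\alpha}$, and combining this with the Sobolev bound produces \eqref{gagliardo_q} with the explicit constant $G_q=S^{-\frac{N(q-2)}{4q}}$. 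This settles the range $N\ge3$, $2\le q\le2^*$, with a constant expressed through the already-introduced Sobolev constant $S$.

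For $N=1,2$ the endpoint $2^*=+\infty$ is unavailable, so I replace the sharp interpolation by a scaling argument based on the (non-sharp) Sobolev embedding $H^1(\rn)\hookrightarrow L^q(\rn)$, which holds for every $2\le q<+\infty$ in these dimensions and is where the restriction $q<+\infty$ originates. Writing the embedding as $\|v\|_q^2\le C\big(\|v\|_2^2+\|\n v\|_2^2\big)$ and testing it on the rescaled function $v(x)=u(\lambda x)$, the homogeneities $\|v\|_q=\lambda^{-N/q}\|u\|_q$, $\|v\|_2=\lambda^{-N/2}\|u\|_2$ and $\|\n v\|_2=\lambda^{1-N/2}\|\n u\|_2$ give, after multiplying by $\lambda^{2N/q}$,
\beq
\|u\|_q^2\le C\big(\lambda^{-2\alpha}\|u\|_2^2+\lambda^{2(1-\alpha)}\|\n u\|_2^2\big),\qquad \alpha=\frac{N(q-2)}{2q}.
\eeq
Minimizing the right-hand side over $\lambda>0$ (equivalently, applying Young's inequality, and noting that the two $\lambda$-exponents sum to $2$) produces $\|u\|_q^2\le G_q^2\,\|u\|_2^{2(1-\alpha)}\|\n u\|_2^{2\alpha}$, which is exactly \eqref{gagliardo_q}. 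The same rescaling argument applied to the embedding in the displayed form also reproves the case $N\ge3$ in the range $2\le q\le 2^*$, so the two parts can be unified into a single proof if one prefers not to track the sharp constant.

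Finally, \eqref{gagliardo} follows by specializing $q=p$: since $\gamma=\frac N2(p-2)$ by \eqref{gamma}, the interpolation exponent is $\alpha=\frac{N(p-2)}{2p}=\frac\gamma p$, whence $1-\alpha=1-\frac\gamma p$, and we set $G=G_p$. The only genuinely delicate point in the whole argument is dimensional: in $N=1,2$ the absence of the $L^{2^*}$ endpoint forces one to abandon the clean H\"older interpolation and argue instead via rescaling together with the embedding theorem, which is also what confines the admissible range to $q<+\infty$. Everything else reduces to a routine one-parameter optimization in $\lambda$ and a bookkeeping of scaling exponents; the underlying Sobolev embedding itself is classical and may simply be cited.
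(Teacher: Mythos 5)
The paper offers no proof of this lemma: it is explicitly \emph{recalled} as the classical Gagliardo--Nirenberg inequality, with \cite{We} cited only for the fact that the optimal constant $G$ is attained by $Z_\rho$. Your argument is therefore a supplement rather than an alternative to anything in the paper, and it is correct: for $N\ge3$ the interpolation exponent $\alpha$ determined by $\frac1q=\frac{1-\alpha}{2}+\frac{\alpha}{2^*}$ is indeed $\frac{N(q-2)}{2q}$, and H\"older interpolation combined with $\|u\|_{2^*}\le S^{-1/2}\|\n u\|_2$ gives \eqref{gagliardo_q} with $G_q=S^{-\frac{N(q-2)}{4q}}$; for $N=1,2$ the scaling of $\|v\|_q$, $\|v\|_2$, $\|\n v\|_2$ under $v=u(\lambda\cdot)$ is computed correctly and the one-parameter minimization yields the product form since $\alpha<1$ there. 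One immaterial slip: the two $\lambda$-exponents $-2\alpha$ and $2(1-\alpha)$ do not \emph{sum} to $2$, they \emph{differ} by $2$ (equivalently, their convex combination with weights $1-\alpha$ and $\alpha$ vanishes, which is what makes the optimization produce $\|u\|_2^{2(1-\alpha)}\|\n u\|_2^{2\alpha}$); this does not affect the argument. Note also that your unified scaling proof degenerates at the endpoint $q=2^*$ (where $\alpha=1$ and the infimum over $\lambda$ is attained only in the limit $\lambda\to\infty$), so the H\"older-plus-Sobolev route is the one that genuinely covers that endpoint.
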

Of course, if $N\ge3$, the above inequality holds true also for $q=2^*$, reducing to
the first Sobolev inequality in \eqref{cost sobolev} (with $S=G_{2^*}^{-2}$).
It is well known, see \cite{We}, that $G$ is achieved by $Z_\rho$, for any $\rho$.
Recalling \eqref{eq:poho}, standard calculations (see e.g. the appendix in \cite{BMRV}) yield
\begin{equation}\label{def G}
G=\frac{\|U\|_p}{\|U\|_2^{1-\frac \gamma p}\|\n U\|_2^\frac \gamma p}
= \frac{(2p)^{\frac1p}}{(2N-p(N-2))^{\frac{p-\gamma}{2p}}(N(p-2))^{\frac{\gamma}{2p}}}\left(\frac{N(p-2)-4}{2}\right)^{\frac{p-2}{2p}}
m_{\rho_0}^{-\frac{p-2}{2p}}.
\end{equation}

Next, we recall some basic estimates involving $V$ and $W$ (defined in
\eqref{eq:defW}), that immediately follow from H\"older,  Gagliardo-Nirenberg and Sobolev inequalities.
\begin{lemma}\label{stime_q V W}
For every $2\le q< 2^*$ we have
\begin{eqnarray*}\label{stima_q V3 eq}
\left|\ir V(x)u^2\, dx\right| &\leq& \|V\|_{\frac{q}{q-2}}\|u\|_{q}^2\leq G_q^2 \|V\|_{\frac{q}{q-2}}
\|u\|_2^{2-\frac{N(q-2)}{q}}\|\n u\|_2^{\frac{N(q-2)}{q}}
,
\\
\label{stima_q W N}
\left|\ir V(x)u(x)\n u(x)\cdot x\, dx\right|&\leq&
 \|W\|_{\frac{2q}{q-2}} \|u\|_q \| \nabla u\|_2
  \leq
 G_q \|W\|_{\frac{2q}{q-2}} \|u\|_2^{1-\frac{N(q-2)}{2q}}\|\n u\|_2^{1+\frac{N(q-2)}{2q}} .
\end{eqnarray*}

Furthermore, if $N\ge 3$, then the above inequalities hold also for $q=2^*$:
\begin{eqnarray}\label{stima V3 eq}
\left|\ir V(x)u^2\, dx\right| &\leq& \|V\|_{\frac N 2}\|u\|_{2^*}^2\leq S^{-1} \|V\|_{\frac N 2}\|\n u\|_2^2\,,\\
\label{stima W N}
  \left|\ir V(x)u(x)\n u(x)\cdot x\, dx\right|&\leq&
  \|W\|_N\|u\|_{2^*}\|\n u\|_2\leq S^{-1/2}\|W\|_N\|\n u\|_2^2\,.
\end{eqnarray}
\end{lemma}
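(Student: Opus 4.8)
The plan is to obtain each of the four displayed chains by combining a single application of Hölder's inequality with the Gagliardo--Nirenberg inequality \eqref{gagliardo_q} (and, in the endpoint case $q=2^*$, with the Sobolev inequality \eqref{cost sobolev}). The only structural observations needed are: the choice of conjugate exponents that makes the power of $u$ land exactly on $\|u\|_q$; and the elementary pointwise bound $|\n u\cdot x|\le |\n u|\,|x|$, which lets the weight $V(x)\,x$ be absorbed into $W(x)=V(x)|x|$ from \eqref{eq:defW}.

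For the first estimate, I would write $|V|\,u^2=|V|\cdot u^2$ and apply Hölder with exponents $\frac{q}{q-2}$ and its conjugate $\frac{q}{2}$, which gives
\begin{equation*}
\left|\ir V(x)u^2\,dx\right|\le \|V\|_{\frac{q}{q-2}}\,\big\|u^2\big\|_{\frac{q}{2}}=\|V\|_{\frac{q}{q-2}}\,\|u\|_q^2.
\end{equation*}
Squaring \eqref{gagliardo_q} and substituting then produces the exponent $2-\frac{N(q-2)}{q}$ on $\|u\|_2$ and $\frac{N(q-2)}{q}$ on $\|\n u\|_2$, which is precisely the claimed bound.

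For the second estimate, I would first use Cauchy--Schwarz on the scalar product, $|u\,\n u\cdot x|\le |x|\,|u|\,|\n u|$, so that the integrand is dominated by $|W(x)|\,|u|\,|\n u|$. A three-factor Hölder inequality with exponents $\frac{2q}{q-2}$, $q$ and $2$ (admissible since $\frac{q-2}{2q}+\frac1q+\frac12=1$) then yields
\begin{equation*}
\ir |W(x)|\,|u|\,|\n u|\,dx\le \|W\|_{\frac{2q}{q-2}}\,\|u\|_q\,\|\n u\|_2,
\end{equation*}
and inserting \eqref{gagliardo_q} for $\|u\|_q$ gives the stated powers $1-\frac{N(q-2)}{2q}$ and $1+\frac{N(q-2)}{2q}$. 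The endpoint inequalities \eqref{stima V3 eq}--\eqref{stima W N} for $N\ge3$ follow by taking $q=2^*$: one checks $\frac{q}{q-2}=\frac{N}{2}$ and $\frac{2q}{q-2}=N$, and in place of Gagliardo--Nirenberg one uses $\|u\|_{2^*}^2\le S^{-1}\|\n u\|_2^2$ from \eqref{cost sobolev}.

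Since everything reduces to bookkeeping of conjugate exponents, I do not expect a genuine obstacle. The only point deserving minimal care is verifying that the chosen triple of Hölder exponents is admissible and that the resulting Lebesgue index of the weight matches $\frac{2q}{q-2}$ (equivalently $N$ when $q=2^*$), so that the hypotheses $V\in L^{q/(q-2)}$ and $W\in L^{2q/(q-2)}$ are exactly what gets used.
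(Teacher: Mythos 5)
Your proof is correct and follows exactly the route the paper indicates (the lemma is stated there without a written proof, only the remark that the estimates ``immediately follow from H\"older, Gagliardo--Nirenberg and Sobolev inequalities''): H\"older with exponents $\frac{q}{q-2},\frac{q}{2}$ for the first chain, the three-factor H\"older with $\frac{2q}{q-2},q,2$ after the pointwise bound $|\n u\cdot x|\le|\n u|\,|x|$ for the second, then \eqref{gagliardo_q}, and \eqref{cost sobolev} at $q=2^*$. All exponent checks are accurate, so nothing further is needed.
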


Let $\lambda\in \R$, we denote by $I_\lambda,I_{\infty,\lambda}:H^1(\R^N)\to\R$ the functionals
$$
I_\lambda(u)=\frac12\int_{\R^N}|\D u|^2dx+\frac{\lambda}{2}\int_{\R^N}u^2dx
-\frac12\int_{\R^N}V(x) u^2dx-\frac1p\int_{\R^N}|u|^p dx,
$$
$$
I_{\infty,\lambda} (u)=\frac12\int_{\R^N}|\D u|^2dx+\frac{\lambda}{2}\int_{\R^N}u^2dx
-\frac1p\int_{\R^N}|u|^p dx.
$$
By Lemma \ref{stime_q V W} the functionals $I_\lambda$ and $F$ (see \eqref{def_F}) are well defined and of class $C^1$.

\proof[Proof of Proposition \ref{TNE}]
 Assume, by contradiction, that \eqref{Rs} has a solution $(u,\lambda)\in S_\rho\times\R$.
 Then $u\not\equiv 0$ and is a critical point of $I_\lambda$ on $H^1(\R^N)$.
Taking into account that $t\mapsto u(\cdot+t\nu)$ is a smooth curve in $H^1(\R^N)$, we get
$$
0=\frac{d\,}{dt}I_\lambda (u(x+t\nu))_{|_{t=0}}=\frac12\int_{\R^N} \frac{\partial V}{\partial\nu}\,u^2\, dx.
$$
Since $\frac{\partial V}{\partial\nu}\ge 0$ and $\frac{\partial V}{\partial\nu}\not\equiv 0$,  a contradiction arises once we verify $\meas\{x\in\R^N\ :\ u(x)=0\}=0$.
In order to show it, let us observe that $u$ is a non trivial solution of
$$
-\Delta u+c(x)u=0\qquad u\in C^1(\R^N)\cap W^{2,2}(\R^N),
$$
where $c(x)=\lambda-V(x)-|u|^{p-2}\in L^\infty_{\loc}(\R^N)$, so our claim follows from Theorem 1.7 in \cite{HS89}. 
\qed

\proof[Proof of Proposition \ref{CNC}]
Arguing as in the previous proof, for every direction $\nu\in\R^N\setminus\{0\}$ we have
$$
\int_{\R^N}V(x)\,u\,\frac{\partial u}{\partial\nu}\, dx=-\frac{d\,}{dt}I_\lambda (u(x+t\nu))_{|_{t=0}}=0.
$$
\qed

In order to get the compactness result for Palais-Smale sequences,
we  state a  Splitting Lemma for $I_\lambda$, in our framework. Its proof is very close to that given in \cite[Lemma 3.1]{BC}
for exterior domains, so we only  sketch it.

\begin{lemma}\label{splitting-lemma}
 Let us assume that
\begin{enumerate}
\item $N\ge 3$: $V\in L^{N/2}(B_1(0))$, $V\in L^{\tilde r}(\R^N\setminus B_1(0))$ for $\tilde r \in [N/2,+\infty],$
\item $N=1,2$: $V\in L^{r}(B_1(0))$, $V\in L^{\tilde r}(\R^N\setminus B_1(0))$ for $r,\tilde r \in (1,+\infty],$
\item in case $\tilde r=+\infty$, $V$ further satisfies $\lim_{|x|\to+\infty}V(x)=0$,
\item $\lambda>0$.
\end{enumerate}
If  $(v_n)_{n}$ is a bounded Palais-Smale sequence for $I_\la$ in $H^1(\R^N)$,
then, up to a subsequence,   $v_n$  weakly converge to a function  $v\in
H^1(\R^N)$ and   if the convergence is not strong then there exist an integer $k\ge  1$, $k$
nontrivial solutions
$w^1,\dots,w^k\in H^1(\R^N)$ to the limit equation
\beq
\label{1234}
-\Delta w+\la w=|w|^{p-2}w
\eeq
and $k$ sequences $\{y_n^j\}_n\subset\R^N$, $1\le j\le k$, such that
$|y_n^j|\to\infty$ as $n\to\infty$, $|y^{j_1}_n-y^{j_2}_n|\to\infty$,
  for $j_1\neq j_2$, as $n\to \infty$, and
\begin{equation}\label{splitting-PS}
v_n=v+\sum_{j=1}^k w^j(\cdotp-y^j_n)+o(1)\qquad\text{strongly in $H^1(\R^N)$.}
\end{equation}
Moreover, we have
\begin{equation}
\label{splitting-norm}
\|v_n\|_2^2=\|v\|_2^2+\sum_{j=1}^k \|w^j\|_2^2+o(1)
\end{equation}
and
\begin{equation}\label{splitting-energy}
I_\la(v_n)\to I_\la(v)+\sum_{j=1}^k I_{\infty,\la}(w^j)\qquad\text{as $n\to\infty$.}
\end{equation}
\end{lemma}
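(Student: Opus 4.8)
The plan is to adapt the iterative concentration-compactness (bubbling) scheme of Benci--Cerami: since we work on all of $\rn$ rather than on an exterior domain, the geometric compactness used there is replaced by the fact that, under hypotheses (1)--(3), the potential $V$ is a \emph{compact} perturbation, so that $V$ only affects the weak limit and disappears along every profile escaping to infinity. First I would extract, up to a subsequence, a weak limit $v_n\rightharpoonup v$ in $H^1(\rn)$; passing to the limit in $I_\la'(v_n)\to0$ against fixed test functions, and using the local compactness of the Sobolev embedding, shows that $v$ is a weak solution of $-\Delta v+\la v-V(x)v=|v|^{p-2}v$, i.e. $I_\la'(v)=0$.

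The crucial step, and the one where hypotheses (1)--(3) are really used, is the following compactness claim: if $w_n\rightharpoonup0$ in $H^1(\rn)$ then $\ir V(x)w_n^2\,dx\to0$. To prove it I would split $\rn=B_1\cup(\rn\setminus B_1)$: on $B_1$ one combines the estimate of Lemma \ref{stime_q V W} with the compact embedding $H^1(B_1)\hookrightarrow L^q(B_1)$ for some $2\le q<2^*$, so that $w_n\to0$ strongly there; on the complement one uses, according to the value of $\tilde r$, either the finiteness of $\|V\|_{L^{\tilde r}(\rn\setminus B_1)}$ (when $\tilde r<+\infty$) or the decay $V(x)\to0$ (when $\tilde r=+\infty$) to bound the tail uniformly and make it arbitrarily small. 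Granting this claim, Brezis--Lieb gives that $v_n^1:=v_n-v$ is a Palais--Smale sequence for the \emph{autonomous} functional $I_{\infty,\la}$, with $v_n^1\rightharpoonup0$, and that the $V$-contribution to the energy is carried entirely by $v$.

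I would then run the standard bubbling for $I_{\infty,\la}$. If $v_n^1\not\to0$ strongly, Lions' dichotomy produces centers $y_n^1$ with $v_n^1(\cdot+y_n^1)\rightharpoonup w^1\neq0$; since $v_n^1\rightharpoonup0$, the centers cannot remain bounded, so $|y_n^1|\to\infty$, and precisely because $|y_n^1|\to\infty$ the compactness claim forces the $V$-term to drop, so that $w^1$ solves the limit equation \eqref{1234}. Setting $v_n^2:=v_n^1-w^1(\cdot-y_n^1)$ and iterating yields the profiles $w^j$ and the centers $y_n^j$ with $|y_n^{j_1}-y_n^{j_2}|\to\infty$, and the relations \eqref{splitting-PS}--\eqref{splitting-energy} follow from Brezis--Lieb applied at each step together with the asymptotic orthogonality of the translated profiles.

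Finally, the termination of the iteration is where $\la>0$ is essential: testing \eqref{1234} with $w^j$ gives $\|\n w^j\|_2^2+\la\|w^j\|_2^2=\|w^j\|_p^p$, and since $\la>0$ makes this quantity equivalent to $\|w^j\|_H^2$, the Gagliardo--Nirenberg inequality \eqref{gagliardo} yields a uniform lower bound $\|w^j\|_H\ge\delta>0$ on nontrivial solutions. As the masses add up by \eqref{splitting-norm} and $(v_n)$ is bounded, only finitely many bubbles can occur, so $k<+\infty$ and after $k$ steps the residual converges strongly to $0$, which is the asserted decomposition. The main obstacle is the compactness claim of the second paragraph, as it is exactly the point at which the splitting of the integrability assumptions (1)--(3) and the distinction $\tilde r<+\infty$ versus $\tilde r=+\infty$ enter; everything else is a faithful transcription of the argument in \cite{BC}.
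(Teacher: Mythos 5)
Your proposal follows essentially the same route as the paper: extract the weak limit, show that the potential term is compact so that the remainder $v_n-v$ becomes a Palais--Smale sequence for the autonomous functional $I_{\infty,\la}$, then run the Benci--Cerami bubbling iteration, with $\la>0$ providing the uniform lower bound on the nontrivial profiles that terminates it after finitely many steps. The paper locates the concentration centers via a dyadic-cube decomposition of $\rn$ rather than by invoking Lions' dichotomy, but this is the same mechanism, and your observation that $|y_n^1|\to\infty$ follows from $v_n-v\to0$ in $L^p_{\loc}$ matches the paper's argument.

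One step in your compactness claim needs a sharper justification. On $B_1(0)$ with $N\ge3$ the hypothesis is only $V\in L^{N/2}(B_1(0))$, so the H\"older pairing forces the exponent $q=2^*$, for which the embedding $H^1(B_1)\hookrightarrow L^{2^*}(B_1)$ is \emph{not} compact; the route you describe, via a compact embedding into $L^q(B_1)$ for some $q<2^*$ combined with Lemma \ref{stime_q V W}, would require $V\in L^{q/(q-2)}(B_1)$ with $\frac{q}{q-2}>\frac N2$, which is strictly stronger than the stated hypothesis. The paper instead uses that $w_n^2\rightharpoonup 0$ weakly in $L^{N/(N-2)}(B_1(0))$ (boundedness there plus a.e.\ convergence to zero), tested against $V\in L^{N/2}(B_1(0))$; equivalently, one can split $V=V\1_{\{|V|\le M\}}+V\1_{\{|V|>M\}}$ with the second piece small in $L^{N/2}$, handle the bounded piece by the compact embedding into $L^2(B_1)$, and control the small piece by \eqref{stima V3 eq}. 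With this adjustment your argument closes and coincides with the paper's proof.
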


\begin{rem}\label{rem:ass_SL}
We notice that the assumptions on $V$ in this lemma follow from those in our main results.
Let us only observe that in the case $r=+\infty$ in Point 2 of Theorem \ref{Tmin},  from $\|W\|_s<\infty$ we get $\lim_{|x|\to\infty} V(x)=0$ if $s=+\infty$ (see \eqref{1823}) and
$$
\int_{\R^N\setminus B_1(0)}|V(x)|^sdx\le \int_{\R^N\setminus B_1(0)}|V(x)\cdot|x|~|^sdx\le \|W\|_s<\infty\quad\Longrightarrow\quad V\in L^s(\R^N\setminus B_1(0))
$$
if $s\in(\max(2,N),+\infty)$.
\end{rem}
\begin{proof}[Proof of Lemma \ref{splitting-lemma}]
In this proof we argue up to suitable subsequences. Let $v$ be the weak limit of $v_n$ and set $v_{1,n}:=v_n-v$. Then,  $ v_{1,n}\to 0$ weakly in $H^1(\R^N)$, strongly in $L^2_{\loc}(\R^N)$, $L^p_{\loc}(\R^N)$, and a.e. in $\R^N$. Moreover
\beq
\int_{\R^N}V(x)v_{1,n}^2dx = \int_{B_1(0)}V(x)v_{1,n}^2dx  + \int_{\R^N\setminus B_1(0)}V(x)v_{1,n}^2dx = I + II.
\eeq
Assume first $N\ge 3$ and $\tilde r\in [N/2,+\infty)$, and set $\tilde r'$ the conjugate exponent of $r$.
We deduce by Egorov's Theorem that  $v_{1,n}^2\to0$ weakly in
$L^{N/(N-2)}(B_1(0))$ and in $L^{\tilde r'}(\R^N\setminus B_1(0))$, because $v_n^2$ is bounded in $L^{N/(N-2)}(B_1(0))$ and in $L^{\tilde r'}(\R^N\setminus B_1(0))$ and goes to $0$ a.e..
Hence both $I$ and $II$ converge to zero as $n\to\infty$.
Also in the case $\tilde r=+\infty$ the addendum $II$ goes to zero, because $v_{1,n}^2\to0$  in $L^2_{\loc}(\R^N)$, $\|v_{1,n}\|_2$ is bounded and $\lim_{|x|\to+\infty}V(x)=0$.
If $N=1,2$ the arguments above again prove that $I$ and $II$  converge to zero as $n\to\infty$.
Summing up, in any case we have that
\beq
\label{1200}
\int_{\R^N}V(x)v_{1,n}^2dx\longrightarrow 0,\text{ as }n\to\infty,
\eeq
and the sequence $\tilde v_{1,n}$ turns out to be a PS sequence for $I_{\infty,\lambda}$.

If $v_{1,n}\to 0$ in $H^1(\R^N)$ we are done, otherwise we can assume that $\|v_{1,n}\|_H\ge d_1$ for a suitable constant $d_1>0$. As a consequence, we deduce the existence of a constant $\tilde d_1>0$ such that
$$
\|v_{1,n}\|_p\ge \tilde d_1\qquad\forall n\in\N.
$$
Indeed, suppose by contradiction that $\|v_{1,n}\|_p\to 0$. Then, since $v_{1,n}$ is a bounded PS sequence for $I_\lambda$ and taking into account \eqref{1200}, we get
$$
\|\D v_{1,n}\|_2^2+\lambda\|v_{1,n}\|_2^2=\|v_{1,n}\|^p_p+o(1)=o(1),
$$
contrary to $\|v_{1,n}\|_H\ge d_1$.

Now, let  $\{Q_i\}_{i\in\N}$ be a decomposition of $\R^N$ by unitary dyadic cubes, and set
$$
l_n=\max_{i\in \N}\|v_{1,n}\|_{L^{p}(Q_i)}.
$$
Then there exists a constant $l>0$ such that $l_n\ge l$, for all $n\in\N$, because
\begin{eqnarray*}
\nonumber
0<\tilde d_1 \le \|v_{1,n} \|_{p}^{p}& =
&\sum_{i=1}^\infty\|v_{1,n}\|^{p}_{L^{p}(Q_i)}\\
&\le &l_n^{p-2}\sum_{i=1}^\infty\|v_{1,n}\|^2_{L^{p}(Q_i)}\le
c_1\, l_n^{p-2}\sum_{i=1}^\infty\|v_{1,n}\|^2_{H^1(Q_i)}\\
\nonumber
&\le & c_2\, l_n^{p-2},
\end{eqnarray*}
for suitable positive constants $c_1,c_2$ depending only on the Sobolev constant and the upper bound  of $\|v_n\|_H^2$.
 Let $y^1_n$ be the center of a cube $Q_{i_n}$ such that $d_n=\|v_{1,n}\|^{p}_{L^{p}(Q_{i_n})}$ and observe that $|y^1_n|\to \infty$, by $v_{1,n}\to 0$ in $L^p_{\loc}(\R^N)$.
Setting
$$
\tilde v_{1,n}=v_{1,n}(\cdot+y^1_n),
$$
it turns out that $\tilde v_{1,n}$ is a bounded PS sequence for $I_{\infty,\lambda}$.
So, $\tilde v_{1,n}\to w^1$ weakly in $H^1(\R^N)$ and in $L^p(\R^N)$,   in $L^p_{\loc}(\R^N)$ and a.e. in $\R^N$, where  $w^1$ is a weak solution of \eqref{1234}, non trivial because $\|w^1\|_{L^p(B_{\sqrt{N}}(0))}\ge l>0$.
Moreover,  in view of \eqref{1200},
$$
v_n=v+v_{1,n}=v+\tilde v_{1,n}(\cdot -y^1_n) =v+ w^1(\cdot -y^1_n)+[\tilde v_{1,n}(\cdot -y^1_n)-w^1(\cdot -y^1_n)],
$$
$$
\|v_n\|^2_{H}=\|v\|^2_H+ \|v_{1,n}\|^2_H+o(1)=\|v\|^2_H+ \|w^1\|_H^2 +\|\tilde v_{1,n}-w^1\|_H^2+o(1),
$$
$$
I_\lambda(v_n)=I_\lambda(v)+I_{\infty,\lambda}(v_{1,n})+o(1)=
I_\lambda(v)+I_{\infty,\lambda}(w^1)+I_{\infty,\lambda}(\tilde v_{1,n} -w^1 )+o(1).
$$
Iterating the procedure,  taking into account that $v_n$ is bounded and that the action of the ground state solution is positive, the proof is completed (see also \cite[Lemma 3.2]{MP98} for more details).
\end{proof}

Finally, we recall the following well-known fact, see e.g. \cite[Appendix A]{BMRV}.
\begin{lemma}\label{lem:append}
Let $w\in H^1(\R^N)$ be a non-trivial solution of
\[
-\Delta w+\la w=|w|^{p-2}w,
\]
for some $\lambda>0$. Then
\[
\lambda\ge \lambda_{\|w\|},
\qquad
F_\infty(w) \ge m_{\|w\|_2}>0,
\]
where $\lambda_\rho$ is defined in \eqref{Zrho} and $m_{\rho}$ in \eqref{eq:poho},
for every $\rho>0$.
\end{lemma}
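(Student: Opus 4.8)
The plan is to reduce both assertions to the single inequality $\la\ge\la_\rho$, with $\rho:=\|w\|_2$, and then to deduce the latter from the variational characterization of the ground state $U$. First I would record the two identities satisfied by any solution $w$ of $-\Delta w+\la w=|w|^{p-2}w$ (which is smooth and decaying by standard elliptic regularity in the subcritical range, so both are legitimate): the Nehari identity, obtained by testing with $w$,
$$
\|\n w\|_2^2+\la\|w\|_2^2=\|w\|_p^p,
$$
and the Pohozaev identity
$$
\frac{N-2}{2}\|\n w\|_2^2+\frac{N\la}{2}\|w\|_2^2=\frac Np\|w\|_p^p.
$$
Solving this linear system for $\|\n w\|_2^2$ and $\|w\|_p^p$ in terms of $M:=\la\|w\|_2^2$ (all denominators are positive since $p<2^*$) gives
$$
\|\n w\|_2^2=\frac{N(p-2)}{2N-p(N-2)}\,M,\qquad
\|w\|_p^p=\frac{2p}{2N-p(N-2)}\,M,
$$
and hence the closed form
$$
F_\infty(w)=\tfrac12\|\n w\|_2^2-\tfrac1p\|w\|_p^p=\frac{N(p-2)-4}{2\,(2N-p(N-2))}\,\la\,\|w\|_2^2.
$$

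Next I would compare this with $m_\rho$. By \eqref{eq:poho}, for $\rho=\|w\|_2$ one has $m_\rho=\frac{N(p-2)-4}{2(2N-p(N-2))}\la_\rho\rho^2$, i.e. the very same expression with $\la$ replaced by $\la_\rho$. Since the structural constant is strictly positive (because $N(p-2)>4$ and $p<2^*$), this already yields $m_\rho>0$ and shows that the two desired inequalities, $\la\ge\la_\rho$ and $F_\infty(w)\ge m_\rho$, are equivalent. Everything is thus reduced to proving $\la\ge\la_\rho$.

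To this end I would rescale to unit frequency: setting $w(x)=\la^{1/(p-2)}V(\sqrt\la\,x)$ transforms the equation into $-\Delta V+V=|V|^{p-2}V$, while a change of variables gives $\|w\|_2^2=\la^{\frac{2}{p-2}-\frac N2}\|V\|_2^2$. In view of the exponents in \eqref{Zrho} this reads $\la=(\|V\|_2/\rho)^{4(p-2)/(N(p-2)-4)}$, whereas $\la_\rho=(\rho_0/\rho)^{4(p-2)/(N(p-2)-4)}$ with $\rho_0=\|U\|_2$; consequently $\la\ge\la_\rho$ is equivalent to $\|V\|_2\ge\|U\|_2$. In other words, among all non-trivial solutions of the unit-frequency equation the ground state has the least $L^2$ mass.

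This last claim is the only genuinely variational point, and I expect it to be the main (though classical) obstacle. Every non-trivial solution $V$ lies on the Nehari manifold $\mathcal N=\{v\ne0:\|\n v\|_2^2+\|v\|_2^2=\|v\|_p^p\}$ of $J(v)=\tfrac12\|\n v\|_2^2+\tfrac12\|v\|_2^2-\tfrac1p\|v\|_p^p$, and the same Nehari/Pohozaev computation gives $J(V)=\frac{p-2}{2N-p(N-2)}\|V\|_2^2$, with a positive constant independent of $V$. Since $U$ realizes the least-action level $\inf_{\mathcal N}J$, we get $J(U)\le J(V)$ and therefore $\|U\|_2\le\|V\|_2$, which closes the argument. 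The points requiring care are the justification of the Pohozaev identity and the fact that $U$ attains the least action among all (possibly sign-changing) non-trivial solutions; both are standard for the autonomous subcritical equation, and I would quote them from \cite{We} and \cite[Appendix A]{BMRV}.
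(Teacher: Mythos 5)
Your argument is correct and complete modulo the two classical facts you explicitly flag (validity of the Pohozaev identity for $H^1$ solutions of the subcritical autonomous equation, and the least-action property of $U$ among all non-trivial, possibly sign-changing solutions). Note, however, that the paper does not prove this lemma at all: it simply cites \cite[Appendix A]{BMRV}, so there is no internal proof to compare against. Your reduction is the natural one — Nehari plus Pohozaev give $F_\infty(w)=\frac{N(p-2)-4}{2(2N-p(N-2))}\,\la\|w\|_2^2$, which together with \eqref{eq:poho} makes the two inequalities equivalent to $\la\ge\la_{\|w\|_2}$, and your exponent bookkeeping in the rescaling to unit frequency checks out against \eqref{Zrho}. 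Where you diverge from the route implicit in the paper's setup (and in the cited appendix) is the final step: you invoke the least-action characterization of $U$ on the Nehari manifold to get $\|U\|_2\le\|V\|_2$, whereas the standard argument in this context plugs the sharp Gagliardo--Nirenberg inequality \eqref{gagliardo}, whose optimal constant $G$ is achieved by $U$ (Weinstein, \cite{We}), into the identities $\|w\|_p^p=\frac{2p}{2N-p(N-2)}\la\|w\|_2^2$ and $\|\n w\|_2^2=\frac{N(p-2)}{2N-p(N-2)}\la\|w\|_2^2$ to extract the lower bound on $\la$ directly. The two closing steps are of comparable depth and both rest on the same extremality of $U$; yours has the mild advantage of not needing the explicit value \eqref{def G} of $G$, while the Gagliardo--Nirenberg route avoids discussing attainment of $\inf_{\mathcal N}J$ and the identification of the minimizer with $U$ via Kwong's uniqueness. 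Either way the proof is sound.
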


\section{Proof of Theorem \ref{T1}}
\label{PT1}

In this section we assume that $L>0$ is such that assumption
\eqref{eq:ass_VW_senza_rho} implies the following explicit bounds on $V$ and $W$,
for some fixed $\delta\in (0,1)$:
\begin{equation}\label{ipotesi V1}
\|V\|_{N/2}<(1-\delta)S;
\end{equation}
\begin{equation}\label{ipotesi V2}
N|4-p|S^{-1}  \|V\|_{\frac N 2}+4S^{-1/2}\|W\|_N<B,
\end{equation}
\begin{equation}\label{ipotesi V3}
\left[
AMN|4-p|+(N-2)D
\right]S^{-1} \|V\|_{\frac N 2}+
[4AM+2D]S^{-1/2} \|W\|_N
 <ABM
\,,
\end{equation}

where
\beq\label{def costanti}
\begin{array}{c}
 \vspace{2mm}
  A = [2N-(N-2)p],\qquad
 B = N(p-2)-4 ,\qquad
  D = N(p-2)^2 ,
  \\
 \displaystyle M = \left[\frac{\delta}{\gamma}\right]^{\frac{\gamma}{\gamma-2}}
  \left[
  \frac{\gamma}{2}-1
  \right]
  \left(
  \frac{p}{G^p}
  \right)^{\frac{2}{\gamma-2}}
  \frac{1}{m_{\rho_0}\rho_0^s},
\end{array}
\eeq
with $s=2\, \frac{2N-(N-2)p}{N(p-2)-4}$;
moreover:
\beq
\label{ipotesi V4}
3(p-4)^+ S^{-1}\|V\|_{N/2}+4S^{-1/2}\|W\|_N\le  N(p-2)-4.
\eeq
Notice that $(p-4)^+=0$ if $N\ge 4$.

We prove that $F$ has a mountain pass geometry, which ensures by Proposition \ref{bounded PS} the existence
of a Palais-Smale sequence.
Then, in order to recover compactness for this sequence,
we use the Splitting Lemma \ref{splitting-lemma},
and, to apply this, we need to prove that the limit
of the sequence of the Lagrange multipliers related to the PS-sequence is positive.

To start with, we focus on the geometric structure of $F$,
observing first the following scaling property.
For every $u\in S_\rho$ and $h>0$ we define the function $u_h\in S_\rho$ by
$$
u_h(x)=h^{\frac N 2}u(hx).
$$
Since $\n_x u_h(x)=h^{{\frac N 2}+1}\n_y u(hx)$, $y=hx$, we get
\begin{equation}\label{F su u riscalata}
F(u_h)=
\frac{h^2}{2}\ir|\n u|^2 dx
-\frac{h^{\frac N 2(p-2)}}{p}\ir|u|^p dx
-\ir V\left(\frac x h\right)u^2(x)dx.
\end{equation}

For fixed $u\in S_\rho$ we infer:
\begin{equation}\label{propr V uh 1}
\ir V(x)\, u_h^2(x)\, dx\le h^2\|V\|_{N/2}\|u\|_{2^*}^2\longrightarrow 0,\qquad\mbox{ as }h\to 0.
\end{equation}

Therefore, for every  $u\in S_\rho$ it follows:
\beq
\label{1031}
\lim_{h\to 0^+}\|\D u_h\|_2=0,\qquad
\lim_{h\to +\infty}\|\D u_h\|_2=\infty\, ,
\eeq
\begin{equation}\label{limiti Fuh}
\lim_{h\to 0^+}F(u_h)=0,\qquad
\lim_{h\to +\infty}F(u_h)=-\infty\,.
\end{equation}

The following lemma gives a lower estimate for $F$,
useful to prove that $F$ has a mountain pass geometry.

Assumption \eqref{ipotesi V1} and
inequalities \eqref{gagliardo} and \eqref{stima V3 eq}
imply
\begin{lemma}\label{Lstima1 F basso}
 \begin{equation}\label{stima1 F basso eq}
F(u)\geq \frac \delta 2\|\n u\|_2^2-c(\rho)\|\n
u\|_2^\gamma,\qquad\forall u\in S_\rho,
\end{equation}
where  $c(\rho)=\frac{G^p}{p}\rho^{p-\gamma}$, with $G$ defined in \eqref{def G}.
\end{lemma}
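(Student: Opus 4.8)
The plan is to estimate the three terms of $F$ in \eqref{def_F} separately, relying exactly on the three ingredients flagged in the sentence preceding the statement: the smallness assumption \eqref{ipotesi V1} on the potential, the Sobolev-type bound \eqref{stima V3 eq}, and the Gagliardo--Nirenberg inequality \eqref{gagliardo}. Writing, for $u\in S_\rho$,
$$
F(u)=\frac12\|\n u\|_2^2-\frac12\ir V(x)u^2\,dx-\frac1p\|u\|_p^p,
$$
I would treat the first two terms together (they both scale like $\|\n u\|_2^2$) and the last term on its own (it scales like $\|\n u\|_2^\gamma$).

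For the quadratic part, I would invoke \eqref{stima V3 eq}, which gives $\ir V(x)u^2\,dx\le S^{-1}\|V\|_{N/2}\|\n u\|_2^2$. Combined with \eqref{ipotesi V1}, namely $S^{-1}\|V\|_{N/2}<1-\delta$, this yields
$$
\frac12\|\n u\|_2^2-\frac12\ir V(x)u^2\,dx\ge\frac12\bigl(1-S^{-1}\|V\|_{N/2}\bigr)\|\n u\|_2^2\ge\frac{\delta}{2}\|\n u\|_2^2.
$$
For the nonlinear part, I would apply \eqref{gagliardo} and raise it to the power $p$, using $\|u\|_2=\rho$ on $S_\rho$, to obtain $\|u\|_p^p\le G^p\rho^{p-\gamma}\|\n u\|_2^\gamma$; hence $-\frac1p\|u\|_p^p\ge-c(\rho)\|\n u\|_2^\gamma$ with $c(\rho)=\frac{G^p}{p}\rho^{p-\gamma}$. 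Adding the two estimates gives \eqref{stima1 F basso eq}.

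There is no genuine obstacle here: the statement is an immediate consequence of the cited inequalities, and the only points deserving attention are bookkeeping ones. First, the use of \eqref{stima V3 eq} relies on $N\ge3$, which is in force throughout this section. Second, one must check that the exponent matching in Gagliardo--Nirenberg is correct, i.e. that raising \eqref{gagliardo} to the power $p$ produces exactly $\|u\|_2^{p-\gamma}\|\n u\|_2^\gamma$, with $\gamma=\frac N2(p-2)$ fixed in \eqref{gamma} and $G$ as in \eqref{def G}. Since $\gamma>2$, the competition between the two terms on the right-hand side of \eqref{stima1 F basso eq} is precisely what will later produce the mountain pass geometry, but that feature is exploited in the subsequent results rather than in the present proof.
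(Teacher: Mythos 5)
Your proof is correct and follows exactly the route the paper intends: the paper itself gives no written proof, stating only that assumption \eqref{ipotesi V1} together with inequalities \eqref{gagliardo} and \eqref{stima V3 eq} imply the lemma, which is precisely the combination you carry out. The exponent bookkeeping ($\|u\|_p^p\le G^p\rho^{p-\gamma}\|\n u\|_2^{\gamma}$ on $S_\rho$) and the use of $N\ge3$ for \eqref{stima V3 eq} are both handled correctly.
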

Hence from
\eqref{stima1 F basso eq} we infer that $\bar R>0$ exists such that
$$
\cM:=\inf\{F(u)\ :\ u\in S_\rho,\ \|\D u\|_2=\bar R\}>0.
$$
Now, let us consider the function $Z_\rho\in S_\rho$ introduced in \eqref{Zrho}.
By \eqref{1031} and \eqref{limiti Fuh} there exist $0<h_0<1<h_1$
 such that
\begin{eqnarray*}
&& \|\D (Z_\rho)_{h_0}\|_2<\bar R,\qquad F((Z_\rho)_{h_0})<\cM,\\
&& \|\D(Z_\rho)_{h_1}\|_2>\bar R,\qquad F((Z_\rho)_{h_1})<0.
\end{eqnarray*}
Then, we define in a standard way the mountain pass value
\beq
\label{MPv}
m_{V,\rho}:=\inf_{\xi\in\Gamma}\max_{t\in [0,1]}F(\xi(t))
\eeq
where
\beq
\label{1500}
\Gamma=\{\xi\in \cC^0([0,1];S_\rho ) \ :\ \xi(0)=(Z_\rho)_{h_0},\
\xi(1)=(Z_\rho)_{h_1}\}.
\eeq

\begin{rem}\label{mv-mr lemma}
{
Since
$$
m_{\rho}=\inf_{\xi\in\Gamma}\max_{t\in [0,1]}F_\infty(\xi(t))
$$
(see \cite{Je}), it is immediately seen that
\begin{equation}\label{mv-mr 1}
m_{V,\rho}<m_\rho
\end{equation}
(it is sufficient to use the test path $\xi(t)=(Z_\rho)_{h_0(1-t)+h_1t}$
and use assumption \eqref{eq:base_ass}).
Moreover, it holds
\begin{equation}\label{mv-mr 2}
m_{V,\rho}\geq M m_\rho\qquad \forall\rho>0\,.
\end{equation}
In fact, if we set $f(t)=\frac \delta 2 t^2-c(\rho)t^\gamma$,
by \eqref{stima1 F basso eq} we infer $F(u)\geq f(|\nabla u|)$
and hence $m_{V,\rho}$ is greater than the maximum of $f$,
which is achieved for
$\bar t_\rho=\left( \frac{\delta}{\gamma\, c(\rho)} \right)^{\frac{1}{\gamma-2}}$,
getting
$$
f(\bar t_\rho)=
\delta^{\frac{\gamma}{\gamma-2}}
\left[
  \frac{1}{2\gamma^{\frac{2}{\gamma-2}}}-
  \frac{1}{\gamma^{\frac{\gamma}{\gamma-2}}}
  \right]
  \frac{1}{c(\rho)^{\frac{2}{\gamma-2}}}\,.
$$
Recalling that $c(\rho)=\frac{G^p}{p}\,\rho^{p-\gamma}$, by
\eqref{eq:poho} we obtain \eqref{mv-mr 2}.
}\end{rem}

In the following Lemma we recall a result, which can be directly derived
as a particular case of \cite[Theorem 4.5]{Ghou} and which is a key tool
in the proof of Proposition \ref{bounded PS}.

\begin{lemma}\label{lemma gossub}
Let $X$ be a Hilbert manifold and let $J\in C^1(X,\R)$ be a given functional.
Let $K\subset X$ be compact and consider a subset
$$
\cE\subset\{E\subset X:\,E\text{ is compact, $K\subset E$}\}
$$
which is invariant with respect to deformations leaving $K$ fixed. Assume that
$$
\max_{u\in K}J(u)<c:=\inf_{E\in\mathcal{E}}\max_{u\in E} J(u)\in\R.
$$
Let $\sigma_n\in \R$ be such that $\sigma_n\to 0$ and $E_n\in\mathcal{E}$ be a sequence such that
$$
c\le\max_{u\in E_n}J(u)<c+\sigma_n.
$$
Then there exists a sequence $v_n\in X$ such that
\begin{enumerate}
\item $c\le J(v_n)<c+ \sigma_n$,\label{propF}
\item $\|\nabla_X J(v_n)\|< \tilde{c}\sqrt{\sigma_n}$,\label{propF'}
\item $\dist(v_n,E_n)< \tilde{c}\sqrt{\sigma_n}$,\label{propv_n}
\end{enumerate}
for some constant $\tilde{c}>0$.
\end{lemma}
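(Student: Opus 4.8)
The plan is to read this statement as a \emph{quantitative, localized minimax principle}: it produces a Palais--Smale sequence at the minimax level $c$ whose elements are, in a controlled way, both almost critical and close to the almost-optimal sets $E_n$. As the text already indicates, it is a specialization of Ghoussoub's general min--max theorem \cite[Theorem 4.5]{Ghou}, obtained by taking $\cE$ as the homotopy-stable family of compact sets containing the fixed boundary $K$. I would nonetheless reprove it self-containedly by a quantitative deformation argument run by contradiction, since this exposes exactly where the $\sqrt{\sigma_n}$ scaling originates. The governing idea is standard: if, near an almost-minimizing set $E_n$, the functional $J$ had \emph{no} almost-critical point at level $\approx c$, then one could deform $E_n$, while leaving $K$ fixed, into a set on which $\max J<c$; since that deformed set still lies in $\cE$, this would contradict the very definition $c=\inf_{\cE}\max J$.

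Concretely, I would first construct on the $C^1$ Hilbert manifold $X$ a locally Lipschitz pseudo-gradient field $W$ for $J$ with the usual bounds $\|W(v)\|\le 2\|\nabla_X J(v)\|$ and $\langle \nabla_X J(v),W(v)\rangle \ge \|\nabla_X J(v)\|^2$, and then localize it by a Lipschitz cutoff $\chi(v)=\chi_1(J(v))\,\chi_2(\dist(v,E_n))$ that equals $1$ on the active region $\{\,|J-c|\le\sigma_n\,\}\cap\{\,\dist(\cdot,E_n)\le \tfrac12\tilde c\sqrt{\sigma_n}\,\}$ and vanishes outside $\{\,|J-c|\le 2\sigma_n\,\}\cap\{\,\dist(\cdot,E_n)\le \tilde c\sqrt{\sigma_n}\,\}$. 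Integrating the normalized flow $\dot\eta=-\chi(\eta)\,W(\eta)/\|W(\eta)\|$ yields a deformation that is the identity wherever $\chi=0$; in particular, since $\max_K J<c$ and $\sigma_n\to0$, for $n$ large $K$ sits inside $\{J<c-2\sigma_n\}$, so $\eta$ fixes $K$ and, by homotopy-stability, $\eta(\cdot,t)(E_n)\in\cE$ for all $t$.

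Next I would argue by contradiction, for fixed large $\tilde c$ and fixed $n$: suppose no $v$ satisfies simultaneously $c\le J(v)<c+\sigma_n$, $\dist(v,E_n)<\tilde c\sqrt{\sigma_n}$ and $\|\nabla_X J(v)\|<\tilde c\sqrt{\sigma_n}$. Then $\|\nabla_X J\|\ge \tilde c\sqrt{\sigma_n}$ throughout the active region, whence along the flow $\tfrac{d}{dt}J(\eta)\le -\tfrac12\chi\,\|\nabla_X J(\eta)\|\le -\tfrac12\tilde c\sqrt{\sigma_n}$ while $\|\dot\eta\|\le 1$. Running the flow up to time $T\sim \sqrt{\sigma_n}/\tilde c$ therefore pushes every point of $E_n$ that started at level $\ge c$ below $c$ (the initial values are $<c+\sigma_n$), while displacing points by at most $T\sim \sqrt{\sigma_n}/\tilde c<\tfrac12\tilde c\sqrt{\sigma_n}$ once $\tilde c^2>4$; this keeps the trajectories inside the set where $\chi=1$, so the rate estimates remain valid. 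Consequently $\max_{\eta(\cdot,T)(E_n)}J<c$ with $\eta(\cdot,T)(E_n)\in\cE$, contradicting $c=\inf_{\cE}\max J$. The contradiction produces the desired $v_n$, and the same universal $\tilde c$ (depending only on the pseudo-gradient constants) works for all large $n$.

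The main obstacle is precisely the \emph{matched} $\sqrt{\sigma_n}$ bookkeeping: the localization radius, the gradient threshold, and the value gap $\sigma_n$ must be balanced so that the flow both lowers $\max_{E_n}J$ below $c$ \emph{and} never escapes the active region, and it is this balance that forces the square root and pins down $\tilde c$. A secondary, but essential, check is that the deformation genuinely fixes $K$ and that $\eta(\cdot,T)(E_n)$ is still admissible, which rests only on the invariance of $\cE$ under deformations leaving $K$ fixed together with $\max_K J<c$. I would also record the alternative Ekeland route, namely applying Ekeland's variational principle to $E\mapsto \max_E J$ on $\cE$; its delicate point is the completeness of $\cE$ in the Hausdorff metric, which $\cE$ need not enjoy and which \cite{Ghou} circumvents by a careful perturbation of the functional. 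This is exactly why deriving the lemma as a special case of \cite[Theorem 4.5]{Ghou} is the economical choice.
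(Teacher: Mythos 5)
The paper does not actually prove this lemma: it is explicitly ``recalled'' as a particular case of \cite[Theorem 4.5]{Ghou}, so the paper's entire proof is the citation. You correctly identify that route, and your additional self-contained argument is a genuinely different one --- the standard quantitative deformation lemma with localization near $E_n$ (in the spirit of Willem's minimax principle) rather than Ghoussoub's Ekeland-type perturbation on the class of sets. Your bookkeeping is right where it matters: the decrease of $J$ along the truncated pseudo-gradient flow over a time $T\sim\sqrt{\sigma_n}/\tilde c$ beats the value gap $\sigma_n$ exactly when the displacement $T$ stays below the localization radius $\tfrac12\tilde c\sqrt{\sigma_n}$, which pins down $\tilde c^2>4$ and explains the square root; and you correctly check that $K$ is fixed for large $n$ so that the deformed set remains in $\cE$. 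What the citation buys is the sharper one-sided conclusion $c\le J(v_n)$ and the avoidance of all flow technicalities; what your deformation argument buys is transparency about where $\sqrt{\sigma_n}$ comes from.

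One technical point deserves care in your version. Your contradiction hypothesis gives the lower bound $\|\nabla_X J\|\ge\tilde c\sqrt{\sigma_n}$ only on the set $\{c\le J<c+\sigma_n\}\cap\{\dist(\cdot,E_n)<\tilde c\sqrt{\sigma_n}\}$, yet your cutoff $\chi$ is supported in the two-sided band $\{|J-c|\le 2\sigma_n\}$, where the gradient may vanish below level $c$; there the normalized field $-\chi W/\|W\|$ is not defined, since a pseudo-gradient exists only on the set of regular points. The standard fixes are either to run the contradiction with the two-sided negation (yielding $c-\sigma_n\le J(v_n)<c+\sigma_n$, which is all that is used in Proposition \ref{bounded PS}), or to support $\chi_1$ in $\{J\ge c\}$ and accept a merely continuous field, which then requires an extra approximation step. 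As written, your construction has this small gap, but it does not affect the applicability of the lemma in the paper; if you want the statement exactly as given, with $c\le J(v_n)$, the economical course is indeed to invoke \cite[Theorem 4.5]{Ghou} as the authors do.
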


\begin{prop}\label{bounded PS}
There exists a Palais-Smale  sequence $(v_{n})_n$ for $F$ constrained
on  $S_\rho$ at the level $m_{V,\rho}$, namely
\begin{equation}\label{Palais-Smale-meps}
  F(v_n)\to m_{V,\rho},\qquad \nabla_{S_\rho}F(v_n)\to 0, \quad\mbox{ as }n\to\infty,
\end{equation}
such that
\begin{equation}
\label{Pohozaev-id}
\|\nabla
v_{n}\|_2^2-\frac{N(p-2)}{2p}\|v_{n}\|_{p}^{p}-\frac{1}{2}\int_{\R^N}V(x)(N
v_{n}^2+2v_n\nabla v_n\cdotp x)dx\to 0, \quad\mbox{ as }n\to\infty,
\end{equation}
\beq
\label{1731}
\lim_{n\to\infty} \|(v_n)^-\|_2=0.
\eeq
Moreover, the sequence $(v_{n})_n$ is bounded and the related Lagrange multipliers
\begin{equation}
\label{def-lambda_n}
\la_n:=-\frac{DF(v_n)[v_n]}{\rho^2}
\end{equation}
are bounded and verify, up to a subsequence, $\lambda_n\to \la$, with $\lambda>0$.
\label{prop-bounded-PS}
\end{prop}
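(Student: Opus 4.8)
The plan is to realize $(v_n)_n$ through a scaling--augmented minimax à la Jeanjean, and then to extract every stated property from the three conclusions \ref{propF}--\ref{propv_n} of Ghoussoub's Lemma \ref{lemma gossub}. I would work on the Hilbert manifold $X=\R\times S_\rho$ with the scaled functional $\Psi(s,u):=F(u_{e^s})$, whose explicit form is \eqref{F su u riscalata} with $h=e^s$, and with the class of augmented paths joining $(0,(Z_\rho)_{h_0})$ to $(0,(Z_\rho)_{h_1})$. Since $u\mapsto u_{e^s}$ is, for each fixed $s$, a diffeomorphism of $S_\rho$ preserving the $L^2$-norm, projecting any augmented path onto $t\mapsto(\beta(t))_{e^{\alpha(t)}}$ yields an element of $\Gamma$ with the same maximal value of $F$, while the choice $s\equiv0$ embeds $\Gamma$ into the augmented class; hence the augmented minimax level equals $m_{V,\rho}$. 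The strict inequality $\max_K\Psi<m_{V,\rho}$ at the two endpoints is already available from the mountain pass geometry, because $m_{V,\rho}\ge\cM>0$ and $m_{V,\rho}\ge Mm_\rho$ by \eqref{mv-mr 2}, so Lemma \ref{lemma gossub} applies.

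Let $(s_n,u_n)$ be the sequence it produces and set $v_n:=(u_n)_{e^{s_n}}$. First I would note that the $s$-components of the near-optimal augmented sets can be kept bounded (build them from the scaling path of the positive profile $Z_\rho$), so by conclusion \ref{propv_n} the $s_n$ are bounded and $e^{s_n}$ stays away from $0$ and $\infty$. Then conclusion \ref{propF} gives $F(v_n)\to m_{V,\rho}$, proving \eqref{Palais-Smale-meps}. The gradient bound splits into the $s$-derivative $\partial_s\Psi(s_n,u_n)$, which equals the Pohozaev functional evaluated at $v_n$ since $\frac{d}{dh}F(u_h)|_{h=1}$ is exactly the left-hand side of \eqref{Pohozaev-id}, yielding \eqref{Pohozaev-id}; and into the $S_\rho$-derivative, which transfers — the scaling being an $H^1$-bounded diffeomorphism for bounded $s_n$ — to $\nabla_{S_\rho}F(v_n)\to0$. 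For \eqref{1731} I would use that $F$ and the Pohozaev functional are invariant under $u\mapsto|u|$, so the sets $E_n$ may be chosen with nonnegative $u$-components; since $t\mapsto t^-$ is $1$-Lipschitz and scaling preserves the $L^2$-norm, conclusion \ref{propv_n} gives $\|(v_n)^-\|_2=\|(u_n)^-\|_2\le\dist_X((s_n,u_n),E_n)\to0$.

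Boundedness follows by combining the two limits. Writing $a=\|\n v_n\|_2^2$, the combination $\gamma F(v_n)-P(v_n)$ cancels the $\|v_n\|_p^p$-term and equals $(\tfrac\gamma2-1)a+\tfrac{N(4-p)}{4}\ir Vv_n^2+\ir Vv_n\,\n v_n\cdot x$, whose last two terms are bounded, via Lemma \ref{stime_q V W}, by $\big(\tfrac{N|4-p|}{4}S^{-1}\|V\|_{N/2}+S^{-1/2}\|W\|_N\big)a$. Since $\tfrac\gamma2-1=\tfrac B4$ and the left-hand side tends to $\gamma m_{V,\rho}$, assumption \eqref{ipotesi V2} makes the coefficient of $a$ strictly positive, so $a$ is bounded; with $\|v_n\|_2=\rho$ this gives boundedness in $H^1$ and hence of $\lambda_n$ from \eqref{def-lambda_n}. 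Passing to a subsequence, all of $\|\n v_n\|_2^2$, $\|v_n\|_p^p$, $\ir Vv_n^2$ and $\ir Vv_n\,\n v_n\cdot x$ converge, so $\lambda_n\to\lambda\in\R$.

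The main obstacle is the positivity of $\lambda$. Starting from $\lambda_n\rho^2=\|v_n\|_p^p-\|\n v_n\|_2^2+\ir Vv_n^2$, I would use the limits $F(v_n)\to m_{V,\rho}$ and $P(v_n)\to0$ to eliminate $\|v_n\|_p^p$ and $\|\n v_n\|_2^2$, arriving at $\lambda_n\rho^2=\tfrac{2A}{B}\,m_{V,\rho}+\tfrac{p-2}{B}\big[(2-N)\ir Vv_n^2-2\ir Vv_n\,\n v_n\cdot x\big]+o(1)$, where $A=2N-(N-2)p>0$ (using $p<2^*$) and $B>0$. The leading term is strictly positive; the correction terms are estimated by Lemma \ref{stime_q V W} in terms of $\|V\|_{N/2}$, $\|W\|_N$ and the a priori bound on $a$, which is itself controlled by $m_{V,\rho}$ through \eqref{ipotesi V2}, while the lower bound $m_{V,\rho}\ge Mm_\rho$ from \eqref{mv-mr 2} renders the estimate fully explicit. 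Assumption \eqref{ipotesi V3}, whose constants $A$, $B$, $D=N(p-2)^2$ and $M$ are tuned precisely to this computation (with \eqref{ipotesi V4} used to secure the attendant sign bookkeeping), guarantees that the positive leading term dominates, whence $\lambda>0$. This last positivity — exactly the hypothesis required to invoke the Splitting Lemma \ref{splitting-lemma} — is the delicate point of the whole argument.
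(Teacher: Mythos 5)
Your proposal is correct and follows essentially the same route as the paper: the augmented functional on $S_\rho\times\R$ together with Ghoussoub's Lemma \ref{lemma gossub} produces the Palais--Smale sequence with the almost-Pohozaev identity and the sign information, your combination of $\gamma F(v_n)$ with the Pohozaev expression reproduces exactly the paper's identity \eqref{a_n-bd} (so boundedness follows from \eqref{ipotesi V2}), and your formula $\la_n\rho^2=\frac{2A}{B}m_{V,\rho}+\frac{p-2}{B}\left[(2-N)c-2d\right]+o(1)$ is precisely the paper's \eqref{lambda>0}, so positivity follows from \eqref{ipotesi V3} together with $m_{V,\rho}\ge Mm_\rho$. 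The only slip is the parenthetical appeal to \eqref{ipotesi V4}, which plays no role in this proposition (it is used only in Lemma \ref{L>}); everything needed here is already contained in \eqref{ipotesi V2}--\eqref{ipotesi V3}.
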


\proof

\emph{Step 1. Existence of the Palais-Smale sequence}

The existence of a PS-sequence that verifies \eqref{Pohozaev-id} and \eqref{1731}
closely follows the arguments in \cite[Proposition 3.11]{BMRV},
where the authors adapt some ideas in \cite{Je}.
We recall the main strategy, referring to \cite{BMRV} for the details.
A key tool is to set:
$$
\wt{F}(u,h):=F(e^{\frac N2 h}u_h(e^h\cdot))\qquad\text{for all $(u,h)\in H^1(\R^N)\times \R$,}
$$
$$
\wt{\Gamma}:=\{\wt{\xi}\in \cC^0([0,1];S_\rho\times\R)\ : \ \wt{\xi}(0)=((Z_\rho)_{h_0},0),\,\wt{\xi}(1)=((Z_\rho)_{h_1},0)\}
$$
and
$$
\wt{m}_{V,\rho}:=\inf_{\wt{\xi}\in\wt{\Gamma}}\max_{t\in [0,1]}\wt{F}(\wt{\xi}(t)).
$$

It turns out that $\wt{m}_{V,\rho}=m_{V,\rho}$
and that,
if $(u_n,h_n)_n$ is a (PS)$_c$ sequence for $\wt{F}$ with $h_n\to 0$,
then $(u_n)_{h_n}$ is a (PS)$_c$ sequence for $F$.
Now, let us consider a sequence ${\xi}_n\in{\Gamma}$ such that
$$
m_{V,\rho}\le \max_{t\in [0,1]} {F}( {\xi}_n(t))< m_{V,\rho}+\frac{1}{n}.
$$
Then, we are in a position to apply  Lemma \ref{lemma gossub}
to $\wt{F}$ with
$$
X:=S_\rho\times\R,\quad K:=\{((Z_\rho)_{h_0},0),\,((Z_\rho)_{h_1},0)\},\quad
\mathcal{E}=\wt{\Gamma}_\rho,\quad
E_n:=\{({\xi}_n(t),0)\, :\,t\in[0,1]\}.
$$
As a consequence, there exist a sequence $(u_{n},h_{n})\in S_\rho\times\R$ and $\tilde{c}>0$ such that
$$
m_{V,\rho}-\frac{1}{n} < \wt{F}(u_{n},h_{n}) < m_{V,\rho}+\frac{1}{n}
$$
$$
\min_{t\in [0,1]}\|(u_{n},h_{n})-({\xi}_n(t),0)\|_{H^1(\R^N)\times\R}<\frac{\tilde{c}}{\sqrt{n}}
$$
$$
\|\nabla_{S_\rho\times\R}\wt{F}(u_{n},h_{n})\|<\frac{\tilde{c}}{\sqrt{n}}.
$$

We observe that, differentiating with respect to $h$,
we get the ``almost'' Pohozaev identity \eqref{Pohozaev-id},
while, differentiating with respect to the first variable on the tangent space to $S_\rho$,
we recover the equation \eqref{def-lambda_n} for the Lagrange multiplier.

\vspace{2mm}

\emph{Step 2. Boundedness of the Palais-Smale sequence}

We set
\beq\label{eq:def-a_n}
a_n:=\|\nabla v_n\|_2^2,\quad
b_n:=\|v_n\|_{p}^{p},\quad
c_n:=\int_{\R^N}V(x)v_n^2 dx,\quad
d_n:=\int_{\R^N}V(x)v_n\nabla v_n\cdotp x \, dx.
\eeq
By (\ref{Palais-Smale-meps}), (\ref{Pohozaev-id}) and (\ref{def-lambda_n})
we get
\begin{eqnarray}\label{PS-v_n}
a_n-c_n-\frac{2}{p}b_n= 2m_{V,\rho}+o(1)
\end{eqnarray}
 \begin{eqnarray}\label{lambda_n-new}
a_n-c_n+\la_n \rho^2= b_n+o(1)(a_n^{1/2}+1) \,
\end{eqnarray}
\begin{eqnarray}\label{Pohozaev-new}
a_n-\frac{N(p-2)}{2p}b_n-\frac{N}{2}c_n-d_n=o(1).
\end{eqnarray}
The term  $(a_n^{1/2}+1)$ is in \eqref{lambda_n-new}
because we do not know that $v_n$ is bounded in $H^1(\rn)$ yet.
By \eqref{PS-v_n}  and \eqref{Pohozaev-new}
we obtain
$$
\frac{N(p-2)-4}{2p}b_n= 2m_{V,\rho}-\frac{N-2}{2}c_n-d_n+o(1)\,
$$
and, recalling the definition of $B$ in \eqref{def costanti}, we infer:
\begin{equation}\label{a_n-bd}
\begin{aligned}
a_n&=\frac{4}{N(p-2)-4} \left(2m_{V,\rho}-
\frac{N-2}{2}c_n-d_n\right)+c_n+2m_{V,\rho}+o(1)\\
&=\frac{N(p-2)}{B}2m_{V,\rho}-
\frac{N(4-p)}{B}c_n-\frac{4}{B}d_n+o(1)\,.
\end{aligned}
\end{equation}

Since $m_{V,\rho}< m_\rho$ , $c_n\leq S^{-1}\|V\|_{\frac N 2}a_n$
and $|d_n|\leq S^{-1/2}\|W\|_N a_n$, we get
\beq
\label{a_n-bd 2}
  0\leq B a_n \leq
    N(p-2)2m_\rho +
   \left[N|4-p|S^{-1}\|V\|_{\frac N 2}+4S^{-1/2}\|W\|_N\right] a_n+o(1),
\eeq
so
\begin{equation}\label{a_n-bd 3}
\left(
B
-\left[N|4-p|S^{-1}\|V\|_{\frac N 2}+4S^{-1/2}\|W\|_N\right]
\right)a_n\leq
N(p-2)2m_\rho +o(1)
\,.
\end{equation}
By assumption \eqref{ipotesi V2}
we have $B
-\left[N|4-p|S^{-1}\|V\|_{\frac N 2}+4S^{-1/2}\|W\|_N\right] >0$, so that:
\begin{equation}\label{a_n-bd 4}
a_n\leq
\frac{N(p-2)2m_\rho}{B-\left[N|4-p|S^{-1}\|V\|_{\frac N 2}+4S^{-1/2}\|W\|_N\right] }+o(1)\,
\end{equation}
that yields the boundedness of the sequence $a_n$, and so of $b_n$, $c_n$, $d_n$ and $\lambda_n$.

\vspace{2mm}

\emph{Step 3. Positivity of the Lagrange multiplier}

\bigskip

By the previous step, we can assume that the sequences $a_n, b_n,c_n, d_n$ and $\lambda_n$ converge, up to a subsequence, to suitable $a,b,c,d$ and $\lambda$, respectively.

Since $c_n\leq S^{-1}\|V\|_{\frac N 2}a_n$ and $|d_n|\leq S^{-1/2}\|W\|_N a_n$,
recalling \eqref{mv-mr 2}, the bound on $a_n$ given by \eqref{a_n-bd 4} and the definitions
in \eqref{def costanti},
by \eqref{PS-v_n}--\eqref{Pohozaev-new}  we get:

\begin{equation}\label{lambda>0}
\begin{aligned}
\la\rho^2&=\frac{p-2}{p}b-2m_{V,\rho}\\
&=\frac{2(p-2)}{N(p-2)-4}\left(2m_{V,\rho}-\frac{N-2}{2}c-d\right)-2m_{V,\rho}\\
&=\frac{2N-(N-2)p}{N(p-2)-4}2m_{V,\rho}-
\frac{(N-2)(p-2)}{N(p-2)-4}c-\frac{2(p-2)}{N(p-2)-4}d\\
&\ge
\frac 1 B\Biggl\{
[2N-(N-2)p]M \\
&   -
\frac{N(N-2)(p-2)^2S^{-1}\|V\|_{\frac N 2} }
 {B-\left[N|4-p|S^{-1}\|V\|_{\frac N 2}+4S^{-1/2}\|W\|_N\right] }
  -
\frac{2N(p-2)^2S^{-1/2}\|W\|_N  }
{B-\left[N|4-p|S^{-1}\|V\|_{\frac N 2}+4S^{-1/2}\|W\|_N\right] }
\Biggr\}2m_\rho\\
&=\frac 1 B\left\{
AM-
\frac{\left[(N-2)S^{-1}\|V\|_{\frac N 2}+2S^{-1/2}\|W\|_N\right]D}
 {B-\left[N|4-p|S^{-1}\|V\|_{\frac N 2}+4S^{-1/2}\|W\|_N\right]}
\right\}2m_\rho\\
&=\frac 1 B\frac{ABM-\left\{
\left[
AMN|4-p|+(N-2)D
\right]S^{-1} \|V\|_{\frac N 2}+
[4AM+2D]S^{-1/2} \|W\|_N
\right\}}
{B-\left[N|4-p|S^{-1}\|V\|_{\frac N 2}+4S^{-1/2}\|W\|_N\right]}
2m_\rho
\end{aligned}
\end{equation}
and hence hypothesis \eqref{ipotesi V3} ensures the positivity of $\lambda$.

\qed

\begin{lemma}
\label{L>}
Let $v$ be a weak solution of $(P_\rho)$, for some $\rho>0$. If \eqref{ipotesi V4} holds  then
\beq
\label{F(v) positivo eq}
F(v)\ge 0.
\eeq
\end{lemma}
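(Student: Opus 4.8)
The plan is to extract from the two identities that any (sufficiently regular) weak solution $v$ of $(P_\rho)$ must satisfy, and to combine them so as to write $F(v)$ as an explicit linear combination of the three quantities
\[
a=\|\D v\|_2^2,\qquad c=\ir V(x) v^2\,dx,\qquad d=\ir V(x)\, v\,\D v\cdot x\,dx
\]
(the single-solution analogues of $a_n,c_n,d_n$ in \eqref{eq:def-a_n}). Testing the equation against $v$ yields the Nehari-type relation $a-c+\la\rho^2=\|v\|_p^p$, while the Pohozaev identity, after integrating by parts the term carrying $\n V$ (so as not to assume $V$ differentiable) and then using the Nehari relation to eliminate $\la$, takes precisely the form already encountered in \eqref{Pohozaev-id}, namely
\[
a-\frac{N(p-2)}{2p}\|v\|_p^p-\frac N2\,c-d=0.
\]

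Next I would use this Pohozaev identity to eliminate $\|v\|_p^p$ from $F(v)=\tfrac12(a-c)-\tfrac1p\|v\|_p^p$. A direct computation gives
\[
F(v)=\frac{B}{2N(p-2)}\,a+\frac{4-p}{2(p-2)}\,c+\frac{2}{N(p-2)}\,d,
\]
with $B=N(p-2)-4>0$ by \eqref{eq:base_ass}. The crucial point is that the coefficient of $a$ is strictly positive, so it suffices to control the two remaining terms against $a$. Here the estimates \eqref{stima V3 eq} and \eqref{stima W N} enter: since $V\ge0$ one has $0\le c\le S^{-1}\|V\|_{\frac N2}\,a$ and $|d|\le S^{-1/2}\|W\|_N\,a$. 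Because $c\ge0$, the term in $c$ is nonnegative when $p\le4$ and is in any case bounded below by $-\tfrac{(p-4)^+}{2(p-2)}S^{-1}\|V\|_{\frac N2}\,a$, while the term in $d$ is bounded below by $-\tfrac{2}{N(p-2)}S^{-1/2}\|W\|_N\,a$. Collecting these bounds produces
\[
F(v)\ge\frac{a}{2N(p-2)}\Big[\,B-N(p-4)^+S^{-1}\|V\|_{\frac N2}-4S^{-1/2}\|W\|_N\,\Big].
\]

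To finish I would note that $(p-4)^+>0$ forces $N=3$ (indeed $p<2^*$ allows $p>4$ only for $N<4$), so that $N(p-4)^+=3(p-4)^+$ in all cases; assumption \eqref{ipotesi V4} then makes the bracket nonnegative, and since $a\ge0$ we conclude $F(v)\ge0$. I expect the only genuinely delicate step to be the rigorous justification of the Pohozaev identity for a merely weak solution with a nonsmooth potential: one must legitimately multiply the equation by $x\cdot\D v$, integrate by parts the $\n V$ contribution to recover the term $d$, and control the behaviour at infinity via the local $W^{2,2}$ regularity and a cut-off argument. Once that identity is available in the form \eqref{Pohozaev-id}, everything else reduces to the elementary algebra above together with the two a priori bounds from Lemma \ref{stime_q V W}.
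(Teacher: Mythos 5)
Your proof is correct and follows essentially the same route as the paper: use the Pohozaev identity to eliminate $\|v\|_p^p$, write $F(v)=\bigl(\tfrac12-\tfrac{2}{N(p-2)}\bigr)a+\tfrac{4-p}{2(p-2)}c+\tfrac{2}{N(p-2)}d$, and then control $c$ and $d$ via \eqref{stima V3 eq}--\eqref{stima W N} and assumption \eqref{ipotesi V4}. The only cosmetic difference is that you treat the cases $p\le4$ and $p>4$ in a single formula via $(p-4)^+$, while the paper writes them out separately.
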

\begin{proof}
The function $v$ satisfies the Pohozaev identity:
\begin{equation}\label{Pohozaev v}
\frac 1 p\|v\|_p^p=
\frac{2}{N(p-2)}\|\n v\|_2^2-\frac{1}{p-2}\ir
V(x)v^2\, dx-\frac{2}{N(p-2)}\ir V(x)v\, \n v\cdot x\, dx
\end{equation}
and we get:
\begin{eqnarray}\label{Fv}
 F(v)&=&\frac 1 2\|\n v\|_2^2-\frac 1 2\ir V(x)v^2\, dx-\frac 1 p\|v\|_p^p\\
\nonumber  &=&
  \left(\frac 1 2 -\frac{2}{N(p-2)}\right)\|\n v\|_2^2+
  \frac{4-p}{2(p-2)}\ir V(x)v^2\, dx+
  \frac{2}{N(p-2)}\ir V(x)v\n v\cdot x\, dx.
\end{eqnarray}
If $N\ge 4$ or $N=3$ and $p\in(\frac{10}{3},4]$ then, using \eqref{stima W N},
\eqref{Fv} gives
\begin{eqnarray*}
F(v) &\geq&
  \left(\frac 1 2-\frac{2}{N(p-2)}\right)\|\n v\|_2^2-\frac{2}{N(p-2)}S^{-1/2}\|W\|_N\|\n v\|_2^2\\
  &=&\left(\frac 1 2 -\frac{2}{N(p-2)}-\frac{2}{N(p-2)}S^{-1/2}\|W\|_N\right)\|\n v\|_2^2\,,
\end{eqnarray*}
so, since by assumption \eqref{ipotesi V4}
$$
\frac 1 2 -\frac{2}{N(p-2)}-\frac{2}{N(p-2)}S^{-1/2}\|W\|_N\ge 0,
$$
inequality \eqref{F(v) positivo eq} follows.

If $N=3$ and $p\in (4,6)$ then \eqref{Fv} gives
\begin{eqnarray*}
F(v) &\geq&
\left(\frac 1 2-\frac{2}{3(p-2)}\right)\|\n v\|_2^2-
\frac{p-4}{2(p-2)}S^{-1}\|V\|_{3/2}\|\D v\|_2^2
-\frac{2}{3(p-2)}S^{-1/2}\|W\|_3\|\n v\|_2^2\\
  &=&\left[\frac{3(p-2)-4}{6(p-2)}-\left(\frac{p-4}{2}S^{-1}\|V\|_{3/2}+\frac23S^{-1/2}\|W\|_3\right)\frac{1}{p-2}\right]\|\n v\|_2^2\,,
\end{eqnarray*}
and the claim follows from \eqref{ipotesi V4}.
\end{proof}

 \begin{proof}[End of the proof of Theorem \ref{T1}]
We have proved in Lemma \ref{L>} that no solution of $(P_\rho)$ with negative energy exists.
Now, let us prove the existence of a solution.
Let us consider the bounded Palais-Smale sequence $v_n$ given by Proposition \ref{bounded PS}.
Since $v_{n}$ is bounded in $H^1(\R^N)$,
there exists $v\in H^1(\R^N)$ such that, up to a subsequence,
$v_n$ converges weakly in $H^1(\R^N)$ and a.e. in $\rn$ to a function $v\in H^1(\R^N)$,
which turns out to be a weak solution  of
\begin{equation}\label{eq:v}
-\Delta v+(\la-V)v=|v|^{p-2}v
\end{equation}
with $\|v\|_2\le \rho$.
To prove the theorem we will show that actually $v_n$ converge to $v$ strongly in $H^1$.
Then we are done, because in such a case $\|v\|_2=\rho$ and  $v\ge 0$ by \eqref{1731}.

Since
$$
\int_{\R^N}\nabla v_n\nabla \varphi \,dx+\int_{\R^N}V(x)v_n \varphi \,dx-\int_{\R^N}|v_n|^{p-2}v_n\varphi \,dx =
-\la_n \int_{\R^N}v_n \varphi \,dx + o(1)\|\varphi\|
$$
for every $\varphi\in H^1(\R^N)$,
we have
$$
\int_{\R^N}\nabla v_n\nabla \varphi \,dx+\int_{\R^N}V(x)v_n \varphi \,dx-\int_{\R^N}|v_n|^{p-2}v_n\varphi \,dx =
-\la \int_{\R^N}v_n \varphi \,dx +
(\la-\la_n) \int_{\R^N}v_n \varphi \,dx+
o(1)\|\varphi\|
$$
and hence
$$
\int_{\R^N}\nabla v_n\nabla \varphi \,dx+\int_{\R^N}V(x)v_n \varphi \,dx-\int_{\R^N}|v_n|^{p-2}v_n\varphi \,dx =
-\la \int_{\R^N}v_n \varphi \,dx +
o(1)\|\varphi\|\,,
$$
because $v_n$ is bounded in $H^1(\R^N)$.
Therefore, $v_n$ is also a Palais-Smale sequence for $I_\la$ at level
$m_{V,\rho}+\frac{\la}{2}\rho^2$,
so that we can apply the Splitting Lemma \ref{splitting-lemma}, getting:
$$
v_n=v+\sum_{j=1}^k w^j(\cdotp-y^j_n)+o(1)\,,
$$
being  $w^j$ solutions of
$$
-\Delta w^j+\lambda w^j=|w^j|^{p-2}w^j
$$
and $|y^j_n|\to\infty$.
Assume by contradiction that $k\ge 1$ or, equivalently, that $\mu:=\|v\|_2<\rho$.
Recall that by \eqref{eq:poho} and by Lemma \ref{lem:append}
\begin{equation}\label{stima Fj basso}
 m_{\alpha}>m_{\beta} \qquad\text{if $0<\alpha<\beta$}\quad\mbox{ and }\quad F_\infty(w^j)\ge m_{\alpha_j},
\end{equation}
where $\alpha^j:=\|w_j\|_2$, $j\in\{1,\ldots,k\}$.
The condition $F(v_n)\to m_{V,\rho}$ and (\ref{splitting-energy}) implies
\begin{equation}\label{splitt eq}
m_{V,\rho}+\frac{\la}{2}\rho^2=
F(v)+\frac{\la}{2}\mu^2+\sum_{j=1}^k F_\infty(w^j)+\frac{\la}{2}\sum_{j=1}^k \alpha_j^2\,.
\end{equation}
By \eqref{splitting-norm} we have
$$
\rho^2=\mu ^2+\sum_{j=1}^k \alpha_j^2\,,
$$
and \eqref{splitt eq} becomes
\begin{equation}\label{splitt eq2}
m_{V,\rho}=F(v)+\sum_{j=1}^k F_\infty(w^j)\,.
\end{equation}

Using \eqref{F(v) positivo eq},\eqref{stima Fj basso}
and the fact that $\alpha_j\le\rho$,
we infer that the right-hand side of \eqref{splitt eq2} is  greater or equal
to $m_\rho$.
This contradict the fact that the left-hand side of \eqref{splitt eq2}
is strictly less than $m_\rho$ (see \eqref{mv-mr 1}), proving our result.
\end{proof}

\section{Proof of Theorem \ref{Tmin}}

\subsection{Existence of a local minimizer}

In order to prove the first part of Theorem \ref{Tmin}, we first show that, under
\eqref{eq:main_ass_MPG}, $F$ restricted on $S_\rho$ admits a mountain pass
structure which depends on $\|V\|_{r}$ but is uniform with respect to $\rho$. More precisely,
we have the following
\begin{prop}\label{prop:MPgeom}
Let $N\ge1$ and  $r\in\left(\max(1,\frac{N}{2}),+\infty\right]$.
There exist positive explicit constants $\sigma$, $K$, $\Theta$ and $\Upsilon$, only
depending on $N,p,r$, such that, if \eqref{eq:main_ass_MPG} holds true then
$$
\inf\{F(u)\ :\ u\in S_\rho,\ R_*-\eps\le\|\D u\|_2\le R_*\}>0
$$
where
\[
R_* = \Theta \cdot \|V\|_{r}^{\Upsilon}
\]
and $\eps>0$ is sufficiently small, depending only on (a bound from above on) $\rho$.
\end{prop}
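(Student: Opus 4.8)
The plan is to bound $F$ from below on $S_\rho$ by a one–variable function of $t=\|\nabla u\|_2$ and then to locate a value $R_*$, independent of $\rho$, at which this function is positive with a definite margin. First I would apply Lemma~\ref{stime_q V W} with the exponent $q=\frac{2r}{r-1}\in[2,2^*)$ (so that $\frac{q}{q-2}=r$, and $q=2$ when $r=+\infty$): since $\|u\|_2=\rho$ and the gradient exponent is $N(q-2)/q=N/r$, this gives $\int_{\R^N}Vu^2\,dx\le G_q^2\|V\|_r\rho^{2-N/r}\|\nabla u\|_2^{N/r}$. Combining this with the Gagliardo--Nirenberg inequality~\eqref{gagliardo} for the $L^p$–term yields, for every $u\in S_\rho$,
\[
F(u)\ge g\big(\|\nabla u\|_2\big),\qquad g(t):=\frac12 t^2-\frac12 G_q^2\|V\|_r\rho^{2-N/r}t^{N/r}-\frac{G^p}{p}\rho^{p-\gamma}t^\gamma .
\]
The two competing powers are $N/r<2$ (because $r>N/2$) and $\gamma>2$, so $g$ is negative for small $t>0$ and for large $t$, and the whole issue is to place $R_*$ inside the intermediate window where $g>0$.

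The decisive step is the choice of $R_*=\Theta\,\|V\|_r^{\Upsilon}$, with $\Upsilon$ and $\sigma$ tuned so that both perturbative terms, evaluated at $R_*$, become powers of the \emph{single} small quantity $\|V\|_r\rho^\sigma$. Imposing that both reductions share a common $\sigma$ forces
\[
\Upsilon=\frac{p-\gamma}{(2-N/r)(p-2)},\qquad \sigma=\frac{(2-N/r)(p-2)}{\gamma-2},
\]
and then a direct computation gives
\[
g(R_*)=\frac12 R_*^2\Big[\,1-G_q^2\Theta^{\,N/r-2}\big(\|V\|_r\rho^\sigma\big)^{a}-\tfrac{2}{p}G^p\Theta^{\,\gamma-2}\big(\|V\|_r\rho^\sigma\big)^{b}\,\Big],
\]
with $a=\frac{\gamma-2}{p-2}>0$ and $b=\Upsilon(\gamma-2)>0$. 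Fixing $\Theta$ (e.g. $\Theta=1$) and then choosing the threshold $K$ small enough, assumption~\eqref{eq:main_ass_MPG} makes the bracket $\ge 1/2$, whence $g(R_*)\ge \frac14 R_*^2>0$. All constants $\sigma,\Upsilon,\Theta,K$ depend only on $N,p,r$, as required; the endpoint $r=+\infty$ (with $q=2$, $G_2=1$, $N/r=0$) is included verbatim.

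Finally, to pass from the single point $R_*$ to the shell $\{R_*-\eps\le\|\nabla u\|_2\le R_*\}$ I would use continuity: since $F(u)\ge g(\|\nabla u\|_2)$, it suffices to have $g>0$ on $[R_*-\eps,R_*]$, and $g(R_*)\ge\frac14 R_*^2$ together with a bound on $g'$ on this interval produces such an $\eps$. Because the coefficients of $g$ carry the factors $\rho^{2-N/r}$ and $\rho^{p-\gamma}$, bounding $\rho$ from above controls $g'$ and yields an explicit $\eps$ depending only on $N,p,r$ and (an upper bound on) $\rho$, which matches the statement.

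The main obstacle is exactly the opposite monotonicity of the two error terms in $R_*$: enlarging $R_*$ suppresses the potential term (exponent $N/r-2<0$) but amplifies the $L^p$–term (exponent $\gamma-2>0$), so no crude choice works. The substance of the argument is that the precise exponents $\Upsilon,\sigma$ reduce both errors to powers of the same quantity $\|V\|_r\rho^\sigma$, which is what makes the barrier simultaneously controllable and uniform in $\rho$; the one computation I would carry out in full is the verification that the two exponent constraints are consistent, i.e. that a common $\sigma$ exists, which is where the identity $(\gamma-2)+(p-\gamma)=p-2$ enters.
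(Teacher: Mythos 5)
Your proposal is correct and follows essentially the same route as the paper: the same lower bound $F(u)\ge g(\|\nabla u\|_2)$ from H\"older/Gagliardo--Nirenberg, followed by the same exponent bookkeeping, and indeed your $\sigma=\frac{(2-N/r)(p-2)}{\gamma-2}$ and $\Upsilon=\frac{p-\gamma}{(2-N/r)(p-2)}$ coincide with the values the paper extracts from its elementary Lemma~\ref{lem:elemlem}. The only difference is presentational (the paper packages the one-variable optimization into that lemma, choosing the exact threshold $z_*$ rather than fixing $\Theta=1$ and shrinking $K$), and your explicit continuity argument for the shell $[R_*-\eps,R_*]$ is if anything slightly more careful than the paper's.
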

To prove the proposition, we use the following elementary lemma.
\begin{lemma}\label{lem:elemlem}
Let $A,B,s,\alpha,\beta$ be positive parameters, with $\alpha\le1$, and define
\[
f_z(t) = t - A z^s t^{1-\alpha} - B z t^{1+\beta},\qquad z,t>0.
\]
Let $z_*$ and $t_*$ be defined as
\begin{equation}\label{eq:explicit1}
z_* = \left(\frac{\alpha}{B}\right)^{\frac{\alpha}{\alpha + s\beta}}\left(\frac{\beta}{A}\right)^{\frac{\beta}{\alpha + s\beta}}(\alpha+\beta)^{-\frac{\alpha + \beta}{\alpha + s\beta}},
\qquad
t_* = \left(\frac{\alpha}{B}\right)^{\frac{s}{\alpha + s\beta}}
\left(\frac{A}{\beta}\right)^{\frac{1}{\alpha + s\beta}}(\alpha+\beta)^{\frac{1-s}{\alpha + s\beta}}.
\end{equation}
Then
\begin{equation}\label{eq:elemlem1}
0<z<z_* \qquad\implies\qquad f_z(t_*)>0.
\end{equation}
\end{lemma}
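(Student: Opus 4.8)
The plan is to exploit that, once the second variable is frozen at $t=t_*$, the map $z\mapsto f_z(t_*)$ is a simple strictly decreasing function, so the whole assertion collapses to a single boundary computation at $z=z_*$.

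First I would fix $t=t_*$ and regard
\[
\phi(z):=f_z(t_*)=t_*-A\,t_*^{1-\alpha}z^s-B\,t_*^{1+\beta}z
\]
as a function of $z$ alone on $(0,+\infty)$. Since $A,B,t_*,s>0$, both subtracted terms are strictly increasing in $z$ (indeed $\phi'(z)=-sA\,t_*^{1-\alpha}z^{s-1}-B\,t_*^{1+\beta}<0$), so $\phi$ is strictly decreasing. Consequently, if I can show $\phi(z_*)=0$ (or merely $\phi(z_*)\ge0$), then $0<z<z_*$ forces $\phi(z)>\phi(z_*)\ge0$, which is exactly \eqref{eq:elemlem1}. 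In this way the monotonicity in $z$ does all the work, and the condition $\alpha\le1$ plays no role in this part.

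Thus everything reduces to verifying $f_{z_*}(t_*)=0$, that is, after dividing by $t_*>0$,
\[
A\,z_*^s\,t_*^{-\alpha}+B\,z_*\,t_*^{\beta}=1.
\]
Here I would prove the stronger and cleaner identities
\[
A\,z_*^s\,t_*^{-\alpha}=\frac{\beta}{\alpha+\beta},
\qquad
B\,z_*\,t_*^{\beta}=\frac{\alpha}{\alpha+\beta},
\]
whose sum is manifestly $1$. These two equalities are precisely the relations that $z_*,t_*$ solve by construction: writing $u:=A z^s t^{-\alpha}$ and $v:=B z t^{\beta}$, the formulas \eqref{eq:explicit1} are obtained by imposing $u=\beta/(\alpha+\beta)$ and $v=\alpha/(\alpha+\beta)$ and solving the resulting log-linear $2\times2$ system for the pair $(z,t)$.

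The only genuine work is the exponent bookkeeping needed to confirm these two identities: substituting \eqref{eq:explicit1}, one collects the powers of $\alpha/B$, $\beta/A$ and $\alpha+\beta$ and checks that they telescope to the stated fractions, using repeatedly that all exponents share the common denominator $\alpha+s\beta$. I would organize this by simplifying $z_*^s\,t_*^{-\alpha}$ and $z_*\,t_*^{\beta}$ separately, treating one base at a time. This step is elementary but the most error-prone, and I expect it to be the main (indeed essentially the only) obstacle in the argument.
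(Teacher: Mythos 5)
Your proposal is correct and follows exactly the paper's own argument: verify $f_{z_*}(t_*)=0$ by direct computation (your two identities $A z_*^s t_*^{-\alpha}=\beta/(\alpha+\beta)$ and $B z_* t_*^{\beta}=\alpha/(\alpha+\beta)$ do check out) and then conclude from the strict monotonicity of $z\mapsto f_z(t_*)$. You merely spell out the bookkeeping that the paper compresses into ``by direct calculation.''
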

\begin{proof}
By direct calculation, it follows that $f_{z_*}(t_*)=0$.
Then \eqref{eq:elemlem1} follows, as $f_z(\cdot)$ is
(pointwise) decreasing with respect to $z$.
\end{proof}

\begin{proof}[Proof of Proposition \ref{prop:MPgeom}]
Let
\[
u\in S_\rho,\qquad
\|\D u\|_2= R,\qquad
r = \frac{q}{q-2} \text{ (with $2\le q<2^*$)}.
\]
By Lemma \ref{stime_q V W} and (\ref{gagliardo}) we know that
\beq
\label{1055}
2 F(u) \ge R^2 - G_q^2 \|V\|_{\frac{q}{q-2}}
\rho^{2-\frac{N(q-2)}{q}} R^{\frac{N(q-2)}{q}} - \frac{2}{p}G_p^p
\rho^{p-\frac{N(p-2)}{2}}R^{\frac{N(p-2)}{2}}.
\eeq
Thus we can apply Lemma \ref{lem:elemlem}, with the corresponding notations,
writing
\begin{equation}\label{eq:explicit2}
t=R^2,\quad
z=\rho^{\frac{2N-p(N-2)}{2}},\qquad
A=G_q^2 \|V\|_{\frac{q}{q-2}},\quad
B=\frac{2}{p}G_p^p,
\end{equation}
and
\begin{equation}\label{eq:explicit3}
\alpha=\frac{2N-q(N-2)}{2q}\le1,\qquad
\beta=\frac{N(p-2)-4}{4},\qquad
s=\frac{2}{q}\cdot\frac{2N-q(N-2)}{2N-p(N-2)}.
\end{equation}
In particular, only $A$ depends on $\|V\|_{\frac{q}{q-2}}$, while $B,\alpha,\beta$ and $s$
just depend on $N,p$ and $r$ (via $q$). Then
\[
\alpha + s\beta = \frac{p-2}{q}\cdot\frac{2N-q(N-2)}{2N-p(N-2)},
\]
and we can write
\[
\rho_*^{\frac{2N-p(N-2)}{2}} = z_* = C(N,p,q)\cdot \|V\|_{\frac{q}{q-2}}^{-\frac{N(p-2)-4}{4}\cdot\frac{q}{p-2}\cdot\frac{2N-p(N-2)}{2N-q(N-2)}},
\]
in such a way that $z<z_*$ is equivalent to \eqref{eq:main_ass_MPG}, with a suitable choice
of $\sigma$ and $K$ (the choice is explicit, by combining \eqref{eq:explicit1}, \eqref{eq:explicit2} and \eqref{eq:explicit3}). Moreover
\[
R_*^2 = t_* = C(N,p,q)\cdot
\|V\|_{\frac{q}{q-2}}^{\frac{q}{p-2}\cdot\frac{2N-p(N-2)}{2N-q(N-2)}},
\]
and the proposition follows, with a suitable choice of $\Theta,\Upsilon$
(again, the choice is explicit, by combining \eqref{eq:explicit1}, \eqref{eq:explicit2} and
\eqref{eq:explicit3}).
\end{proof}

Hereafter we assume that $\rho,V$ satisfy \eqref{eq:main_ass_MPG}, that is
\[
0<\rho< \rho_* := H \cdot \|V\|_{r}^{-\tau},
\]
for suitable $H,\tau$.
By Proposition \ref{prop:MPgeom} we know that, for every $\alpha<\rho_*$,
\begin{equation}\label{eq:lem_rephrased}
\inf\{F(u)\ :\ u\in S_\alpha,\ \|\D u\|_2 = R_*\}>0,
\end{equation}
where $R_* = \Theta \cdot \|V\|_{r}^{\Upsilon}$ is independent of $\alpha$. We define
\begin{equation}\label{eq:def_c}
c_{V,\alpha}:= \inf\{F(u)\ :\ u\in S_\alpha,\ \|\D u\|_2\le R_*\}.
\end{equation}

The proof of the first part of Theorem \ref{Tmin} is based on the following proposition.
\begin{prop}\label{prop:exists_min}
If $
c_{V,\rho}<0
$ then $c_{V,\rho}$ is achieved by a solution of $(P_\rho)$.
\end{prop}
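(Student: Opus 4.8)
The plan is to realize $c_{V,\rho}$ as the limit of a Palais--Smale sequence and then recover compactness through the Splitting Lemma \ref{splitting-lemma}. First I would take a minimizing sequence $(u_n)_n$ for the constrained infimum \eqref{eq:def_c}, so that $u_n\in S_\rho$, $\|\D u_n\|_2\le R_*$ and $F(u_n)\to c_{V,\rho}<0$; working with $|u_n|$ in place of $u_n$ (which does not increase $F$ nor $\|\D u_n\|_2$) we may take $u_n\ge0$, and the sequence is automatically bounded in $H^1(\rn)$ since $\|u_n\|_2=\rho$ and $\|\D u_n\|_2\le R_*$. The crucial point at this stage is that the minimizing sequence cannot approach the boundary of the admissible ball: Proposition \ref{prop:MPgeom} provides a positive ``moat'', $\inf\{F(u): R_*-\eps\le\|\D u\|_2\le R_*\}>0$, whereas $c_{V,\rho}<0$, so $\|\D u_n\|_2< R_*-\eps$ for $n$ large. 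Hence the gradient constraint is inactive, and a standard application of Ekeland's variational principle on $S_\rho$ produces (after relabelling) a genuine Palais--Smale sequence for $F$ constrained on $S_\rho$ at level $c_{V,\rho}$, still satisfying $\|\D u_n\|_2< R_*-\eps$. Let $\la_n:=-DF(u_n)[u_n]/\rho^2$ be the associated Lagrange multipliers and let $v\ge0$ be the weak $H^1$-limit of $u_n$, which solves $-\Delta v+(\la-V)v=|v|^{p-2}v$ with $\la:=\lim_n\la_n$.

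Next I would check that $\la>0$, which is exactly the hypothesis needed to invoke the Splitting Lemma. This follows at once from the sign of $c_{V,\rho}$: from $\la_n\rho^2=\|u_n\|_p^p-\|\D u_n\|_2^2+\ir V u_n^2$ and $2F(u_n)=\|\D u_n\|_2^2-\ir Vu_n^2-\tfrac{2}{p}\|u_n\|_p^p$ one obtains
\begin{equation}
\la_n\rho^2=\Bigl(1-\tfrac{2}{p}\Bigr)\|u_n\|_p^p-2F(u_n),
\end{equation}
whence $\la\rho^2\ge -2c_{V,\rho}>0$, since $p>2$ and $F(u_n)\to c_{V,\rho}<0$. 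As in the final part of the proof of Theorem \ref{T1}, the boundedness of $u_n$ and $\la_n\to\la$ turn $(u_n)_n$ into a Palais--Smale sequence for $I_\la$; the assumptions on $V$ required by Lemma \ref{splitting-lemma} hold in the present setting (see Remark \ref{rem:ass_SL}), so the Splitting Lemma applies and yields $u_n=v+\sum_{j=1}^k w^j(\cdot-y_n^j)+o(1)$ in $H^1$, with $|y_n^j|\to\infty$, each $w^j$ a nontrivial solution of the limit equation, together with the mass splitting \eqref{splitting-norm} and the energy splitting \eqref{splitting-energy}; combining the latter with the former, the $\la$-terms cancel exactly as in Theorem \ref{T1}, leaving $c_{V,\rho}=F(v)+\sum_{j=1}^k F_\infty(w^j)$.

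The heart of the matter, and the main obstacle, is to rule out $k\ge1$, i.e. the loss of mass at infinity. Here I would exploit that $V$ is genuinely attractive: by Lemma \ref{lem:append} each bubble carries strictly positive energy $F_\infty(w^j)\ge m_{\|w^j\|_2}>0$, while the escaped bubbles sit at infinity and do not see the potential. The decisive extra information is that the gradients add, $\|\D v\|_2^2+\sum_{j=1}^k\|\D w^j\|_2^2=\lim_n\|\D u_n\|_2^2\le (R_*-\eps)^2<R_*^2$, so there is room to spare inside the admissible ball. Assuming $k\ge1$, I would construct an admissible competitor $\tilde u\in S_\rho$ with $\|\D\tilde u\|_2\le R_*$ by truncating $v$ and the $w^j$ to disjoint compact supports and relocating the bubbles at \emph{finite} distance, so that $\ir V\,w^j(\cdot-\xi_j)^2>0$ for at least one $j$ (possible since $V\ge0$, $V\not\equiv0$ and each $w^j$ is a.e. nonzero). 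Since $F_\infty$ is translation invariant while $V\ge0$, each relocated bubble contributes $F_\infty(w^j)-\tfrac12\ir V\,w^j(\cdot-\xi_j)^2\le F_\infty(w^j)$, with strict inequality for the overlapping one; as the cross terms between well-separated supports are negligible and the truncation/renormalization errors are $o(1)$ (the slack being supplied by $\eps$), this gives $F(\tilde u)<F(v)+\sum_{j=1}^k F_\infty(w^j)=c_{V,\rho}$, contradicting the definition of $c_{V,\rho}$ as an infimum.

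Therefore $k=0$, the convergence $u_n\to v$ is strong, $\|v\|_2=\rho$ and $F(v)=c_{V,\rho}$, so $v$ is the desired solution of $(P_\rho)$ attaining $c_{V,\rho}$, with $\la>0$ and $v\ge0$. I expect the only delicate points to be quantitative: controlling the truncation, separation and renormalization errors in the competitor construction while keeping the gradient below $R_*$ and the energy below $c_{V,\rho}$. All of these are routine once the strict slack $\|\D v\|_2^2+\sum_j\|\D w^j\|_2^2<R_*^2$ is in hand, which is precisely what the interior localization of the minimizing sequence guarantees.
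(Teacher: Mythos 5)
Your first two steps coincide with the paper's proof: the minimizing sequence stays in $\|\D u_n\|_2\le R_*-\eps$ by Proposition \ref{prop:MPgeom}, Ekeland upgrades it to a constrained Palais--Smale sequence, the identity $\la_n\rho^2=(1-\tfrac2p)\|u_n\|_p^p-2F(u_n)$ gives $\la\ge-2c_{V,\rho}/\rho^2>0$, and the Splitting Lemma yields $c_{V,\rho}=F(v)+\sum_{j}F_\infty(w^j)$. The divergence, and the gap, is in how you exclude $k\ge1$. The paper does \emph{not} relocate bubbles: it observes that $\|\nabla v\|_2<R_*$, so $F(v)\ge c_{V,\|v\|_2}$, and then invokes the monotonicity $c_{V,\alpha}\ge c_{V,\rho}$ for $\alpha\le\rho$ (Lemma \ref{lem:ineq_c}, proved by scaling $u\mapsto tu$, $t\ge1$, inside the ball $\|\D u\|_2\le R_*$ using the moat of Proposition \ref{prop:MPgeom}). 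This gives $c_{V,\rho}\ge c_{V,\rho}+\sum_j F_\infty(w^j)$, hence $\sum_j F_\infty(w^j)\le0$, which contradicts Lemma \ref{lem:append}. No strict subadditivity coming from the attractivity of $V$ is needed: the strictly positive bubble energies $m_{\alpha_j}$ already do the job once one knows $F(v)\ge c_{V,\rho}$.

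Your competitor construction, by contrast, has an uncontrolled quantitative competition in the only nontrivial case $v\not\equiv0$, $k\ge1$. To make the supports of the truncated $v$ and of the relocated bubble disjoint you must either send the truncation radius $T\to\infty$ (hence push the bubble's center $\xi_1$ to infinity, so that the claimed gain $\tfrac12\ir V\,w^1(\cdot-\xi_1)^2\to0$, since $V\in L^r$ with $r<\infty$, or $V\to0$ at infinity, forces this integral to vanish as $|\xi_1|\to\infty$ — exactly the mechanism of \eqref{1200}), or keep $\xi_1$ finite and accept overlap with $v$, in which case the cross terms $\ir\nabla v\cdot\nabla w^1(\cdot-\xi_1)\,dx$ and the $L^2$ cross term entering the renormalization back to $S_\rho$ have no favorable sign and are of the same order as the gain. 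So the strict inequality $F(\tilde u)<c_{V,\rho}$ is asserted but not established; the ``routine'' errors and the gain both tend to zero in the same regime, and nothing in your argument shows the gain wins. To repair the proof along the paper's lines, replace the relocation argument by the comparison $F(v)\ge c_{V,\|v\|_2}\ge c_{V,\rho}$, which is exactly what Lemma \ref{lem:ineq_c} provides.
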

In turn, the proof of the proposition is based on the following lemma.
\begin{lemma}\label{lem:ineq_c}
If $c_{V,\rho}<0$ then
\[
0< \alpha \le \rho \qquad\implies\qquad
c_{V,\alpha} \ge c_{V,\rho}.
\]
\end{lemma}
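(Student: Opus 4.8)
The plan is to establish the slightly stronger fact that $\alpha\mapsto c_{V,\alpha}$ is non-increasing as long as it remains negative; the stated inequality then follows immediately. First I would dispose of the trivial case: if $c_{V,\alpha}\ge0$ then $c_{V,\alpha}\ge0>c_{V,\rho}$ and there is nothing to prove, so from now on I may assume $c_{V,\alpha}<0$, and more generally restrict attention to \emph{admissible} functions, i.e.\ $u\in S_\beta$ with $\|\n u\|_2\le R_*$ for some mass $\beta\le\rho$, satisfying $F(u)<0$.

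The key preliminary observation is a \emph{uniform gap}. By Proposition \ref{prop:MPgeom}, applied at an arbitrary mass $\beta\in(0,\rho]$ (note that \eqref{eq:main_ass_MPG} keeps holding for every such $\beta$, since $\|V\|_r\beta^\sigma\le\|V\|_r\rho^\sigma<K$), one has $F>0$ on the annulus $\{\,R_*-\eps\le\|\n u\|_2\le R_*\,\}\cap S_\beta$, where the $\eps>0$ depends only on the upper bound $\rho$ and is therefore uniform in $\beta\in(0,\rho]$. Consequently, every admissible $u$ with $F(u)<0$ automatically satisfies $\|\n u\|_2<R_*-\eps$.

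The main mechanism is the multiplicative scaling $u\mapsto v:=\mu u$, with $\mu=\beta'/\beta\ge1$, which maps $S_\beta$ into $S_{\beta'}$ whenever $\beta\le\beta'$. A direct computation gives $F(v)=\mu^2F(u)+\frac{\mu^2-\mu^p}{p}\|u\|_p^p$, and since $\mu\ge1$ and $p>2$ both correction terms carry the favourable sign, so that $F(v)\le\mu^2F(u)\le F(u)$ as soon as $F(u)<0$. The obstruction---and the only delicate point---is that this scaling inflates the gradient, $\|\n v\|_2=\mu\|\n u\|_2$, so that $v$ need not be admissible. Here the uniform gap enters: if the ratio is controlled, $1\le\mu\le R_*/(R_*-\eps)$, then $\|\n v\|_2<\mu(R_*-\eps)\le R_*$, so $v$ is admissible for $c_{V,\beta'}$; taking the infimum over admissible $u\in S_\beta$ with $F(u)<0$ yields the one-step inequality $c_{V,\beta'}\le c_{V,\beta}$, valid whenever $c_{V,\beta}<0$.

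Finally I would upgrade this local-in-mass inequality to the whole range $\alpha\le\rho$ by a finite chain. Assuming $c_{V,\alpha}<0$, choose $\alpha=\alpha_0<\alpha_1<\dots<\alpha_m=\rho$ with each ratio $\alpha_{i+1}/\alpha_i\le R_*/(R_*-\eps)$; finitely many steps suffice because the admissible per-step factor is a fixed constant strictly larger than $1$. Negativity then propagates by itself: starting from $c_{V,\alpha_0}<0$, the one-step inequality gives $c_{V,\alpha_{i+1}}\le c_{V,\alpha_i}<0$ inductively, whence $c_{V,\rho}=c_{V,\alpha_m}\le\dots\le c_{V,\alpha_0}=c_{V,\alpha}$, which is precisely the claim. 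I expect the verification of the uniform gap---that the $\eps$ furnished by Proposition \ref{prop:MPgeom} can be taken independent of the mass $\le\rho$---to be the step demanding the most care, since the scaling/chaining argument downstream rests entirely on it.
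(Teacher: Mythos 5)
Your proof is correct and rests on the same mechanism as the paper's: the dilation $u\mapsto tu$, $t\ge 1$, strictly decreases a negative energy, while the positivity of $F$ near the sphere $\|\nabla u\|_2=R_*$ (which, as you note, holds uniformly for all masses $\le\rho$) keeps the rescaled function admissible. The only difference is implementational: the paper applies the intermediate value theorem to the continuous path $t\mapsto tu$, $t\in[1,\rho/\alpha]$, using only positivity of $F$ on the sphere $\|\nabla u\|_2=R_*$ as in \eqref{eq:lem_rephrased}, which reaches $\rho$ in a single step and dispenses with your uniform $\eps$-gap and the finite chaining.
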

\begin{proof}
If $c_{V,\alpha} \ge0$ then the lemma is trivial. On the contrary, let $0>c'>c_{V,\alpha}$
and $u \in S_\alpha$ such that $\|\D u\|_2 < R_*$ and $F(u)<c'$. For every $t\ge 1$ we have
\[
tu \in S_{t\alpha} \quad\text{ and }\quad
F(tu) = \frac{t^2}{2}\left(\|\nabla u\|_2^2 - \ir V(x)u^2\, dx\right)-\frac{t^p}{2}\|u\|_p^p
\le t^2 F(u) <c'<0.
\]
We claim that $\left\|\nabla \frac{\rho}{\alpha} u\right\|_2\le R_*$. If not, there exists
$\bar t \in \left(1,\frac{\rho}{\alpha}\right)$ such that $\left\|\nabla \bar t u\right\|_2 =
R_*$, $\|\bar t u\|_2 < \rho$ and $F(\bar t u) <0$, in contradiction with
\eqref{eq:lem_rephrased}. Thus, by definition,
\[
c_{V,\rho} \le F\left( \frac{\rho}{\alpha} u\right) < c'.
\]
Since $c'>c_{V,\alpha}$ is arbitrary, the lemma follows.
\end{proof}
\begin{proof}[Proof of Proposition \ref{prop:exists_min}]
Let $(u_n)_n$ be a minimizing sequence for $c_{V,\rho}$.
By Proposition \ref{prop:MPgeom} we know that $\|\D u_n\|_2\le R_*-\eps$,
for some $\eps>0$ suitably small. Therefore, by Ekeland's principle,
we can assume that $(u_{n})_n$ is a Palais-Smale sequence for $F$ constrained
on  $S_\rho$, i.e.
\begin{equation}\label{Palais-Smale-min}
  F(u_n)\to c_{V,\rho},\qquad \nabla_{S_\rho}F(v_n) \to 0, \quad\text{ as }n\to\infty.
\end{equation}
Since both the functional and the constraint are even, we can choose each $u_n$ to be non-negative. Furthermore, $(u_n)_n$ is bounded by construction, and therefore also the Lagrange
multipliers
\[
\la_n:=-\frac{DF(u_n)[u_n]}{\rho^2}
\]
are bounded. Up to subsequences, we obtain that $u_n\rightharpoonup u\ge0$ weakly in $H^1(\R^N)$, and
$\lambda_n\to\lambda\in\R$. By \eqref{Palais-Smale-min},
\[
o(1) = \frac12\left( \|\nabla u_n\|_2^2 -\int_{\R^N} Vu_n^2\,dx - \|u_n\|_p^p\right) +\frac12 \lambda_n \rho^2 \le F(u_n)   +\frac12 \lambda_n \rho^2  = c_{V,\rho} + \frac12 \lambda \rho^2
+ o(1),
\]
which forces
\[
\lambda \ge - \frac{2 c_{V,\rho}}{\rho^2}>0.
\]
Arguing as in the proof of Theorem \ref{T1} we have that \eqref{Palais-Smale-min} implies that $(u_{n})_n$ is a (free) Palais-Smale sequence for the action functional
$I_\lambda$, with $\lambda>0$. Then the Splitting Lemma \ref{splitting-lemma} applies,
yielding
\[
u_n=u+\sum_{j=1}^k w^j(\cdotp-y^j_n)+o(1)\qquad\text{strongly in $H^1(\R^N)$,}
\]
where
\[
-\Delta u - Vu +\la u=u^{p-1},
\qquad
-\Delta w^j+\la w^j=|w^j|^{p-2}w^j,\ 1\le j\le k,
\]
and
\[
\|u\|_2^2+\sum_{j=1}^k \|w^j\|_2^2 = \rho^2,
\quad
I_\la(u)+\sum_{j=1}^k I_{\infty,\la}(w^j)=c_{V,\rho} + \frac12 \lambda \rho^2,
\quad
F(u)+\sum_{j=1}^k F_{\infty}(w^j)=c_{V,\rho}.
\]
Writing $\|u\|_2= \alpha\le\rho$, we have that $\|\nabla u\|_2 \le \liminf \|\nabla u_n\|_2 < R_*$, thus $F(u) \ge  c_{V,\alpha}$. Lemma \ref{lem:ineq_c} yields
\[
c_{V,\rho} = F(u)+\sum_{j=1}^k F_{\infty}(w^j) \ge c_{V,\alpha} + \sum_{j=1}^k F_{\infty}(w^j)
\ge c_{V,\rho} + \sum_{j=1}^k F_{\infty}(w^j),
\]
forcing
\[
\sum_{j=1}^k F_{\infty}(w^j)\le 0.
\]
By Lemma \ref{lem:append} we deduce that $k=0$, so that $u_n\to u$ strongly in $H^1(\R^N)$ and
the proposition follows.
\end{proof}
To show that $c_{V,\rho}<0$ we will use the fact that the bottom of the spectrum
of the operator $-\Delta - V$ is non-positive. As we mentioned, a sufficient
condition in this direction is contained in Remark \ref{rem:bottom}. For the
reader's convenience we sketch such result in the following lemma.
\begin{lemma}\label{lem:negativebottom}
Let $V\ge0$ satisfy \eqref{eq:suff_cond_bottom}. Then \eqref{eq:V_neg_spect} holds true.\end{lemma}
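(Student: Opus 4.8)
The plan is to exhibit, for each $\rho>0$, an explicit test function realizing \eqref{eq:V_neg_spect}. Since the quadratic form $\varphi\mapsto\int_{\R^N}(|\D\varphi|^2-V\varphi^2)\,dx$ is $2$-homogeneous, it suffices to produce \emph{some} nontrivial $\varphi_0\in H^1(\R^N)$ with $\int_{\R^N}(|\D\varphi_0|^2-V\varphi_0^2)\,dx\le0$ and then rescale $\varphi=\rho\varphi_0/\|\varphi_0\|_2\in S_\rho$. Moreover, because $V\ge0$ everywhere and $V\ge\eta$ on $B_R$ by \eqref{eq:suff_cond_bottom}, one has $\int_{\R^N}V\varphi_0^2\,dx\ge\eta\int_{B_R}\varphi_0^2\,dx$, so the whole problem reduces to finding $\varphi_0$ with
\[
\int_{\R^N}|\D\varphi_0|^2\,dx\le\eta\int_{B_R}\varphi_0^2\,dx.
\]

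The key step is to take as $\varphi_0$ the capacitary potential of $B_R$ relative to a large ball $B_T$, $T>R$: set $\varphi_T\equiv1$ on $B_R$, let $\varphi_T$ be the radial harmonic function in the annulus $B_T\setminus B_R$ with boundary values $1$ on $\partial B_R$ and $0$ on $\partial B_T$, and $\varphi_T\equiv0$ outside $B_T$. This $\varphi_T$ lies in $H^1(\R^N)$ with compact support, its gradient is supported in the annulus, and $\int_{B_R}\varphi_T^2\,dx=|B_R|=\omega_N R^N/N$ is independent of $T$. A direct computation of the capacity of the annulus gives, for $N\ge3$,
\[
\int_{\R^N}|\D\varphi_T|^2\,dx=\frac{\omega_N(N-2)}{R^{-(N-2)}-T^{-(N-2)}}\longrightarrow\omega_N(N-2)R^{N-2}\quad\text{as }T\to\infty,
\]
whereas for $N=1,2$ the same computation (with the linear, resp.\ logarithmic, harmonic profile) yields $\int_{\R^N}|\D\varphi_T|^2\,dx\to0$ as $T\to\infty$.

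It then remains to compare the two sides. For $N\ge3$ the Rayleigh quotient tends to $\omega_N(N-2)R^{N-2}/(\omega_N R^N/N)=N(N-2)/R^2$; since \eqref{eq:suff_cond_bottom} assumes the \emph{strict} inequality $R^2\eta>N(N-2)$, for $T$ large enough we obtain $\int_{\R^N}|\D\varphi_T|^2\,dx<\eta\,|B_R|=\eta\int_{B_R}\varphi_T^2\,dx$, hence $\int_{\R^N}(|\D\varphi_T|^2-V\varphi_T^2)\,dx<0$. For $N=1,2$ the quotient tends to $0<\eta$, so the same conclusion holds for any $\eta>0$ and any $R>0$. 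Rescaling $\varphi_T$ into $S_\rho$ as above yields \eqref{eq:V_neg_spect}.

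The only delicate point is that the ``natural'' candidate --- the untruncated harmonic extension $|x|^{-(N-2)}$ outside $B_R$, which realizes the threshold constant $N(N-2)$ exactly --- fails to lie in $L^2(\R^N)$ for $N=3,4$ (and is not even decaying for $N=1,2$). Truncating at radius $T$ repairs this at the cost of an $O(T^{-(N-2)})$ (resp.\ logarithmically small) excess in the Dirichlet energy, which is precisely why the strict inequality in \eqref{eq:suff_cond_bottom} is needed to absorb the error for $T$ large. All constants are explicit, so no compactness or abstract spectral argument is required.
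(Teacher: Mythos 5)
Your proposal is correct and takes essentially the same approach as the paper: an explicit radial test function equal to $1$ on $B_R$ with a truncated harmonic tail (linear for $N=1$, logarithmic for $N=2$, $|x|^{2-N}$-type for $N\ge3$), whose Dirichlet energy approaches the sharp threshold $N(N-2)|B_R|/R^2$, so that the strict inequality $R^2\eta>N(N-2)$ absorbs the truncation error; the final rescaling into $S_\rho$ by $2$-homogeneity is the same normalization the paper performs via $t^*$. The only cosmetic difference is the truncation scheme for $N\ge3$ (you cut off the capacitary potential at a large radius $T$, the paper shifts the profile down by $\delta$ and takes the positive part), which changes nothing of substance.
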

\begin{proof}
Let $B_R$ and $\eta$ be as in assumption \eqref{eq:suff_cond_bottom}, and without loss of
generality let us assume that $B_R$ is centered at $0$.

In case $N=1$ it is sufficient to choose, for a normalizing constant $t^*>0$,
\[
t^*\varphi(x)=
\begin{cases}
1 & |x|\le R\\
\left(\frac{kR - |x|}{(k-1)R}\right)^+   & |x|\ge R
\end{cases}
\]
with $k>1$ sufficiently large. In case $N=2$ it is sufficient to choose
\[
t^*\varphi(x)=
\begin{cases}
1 & |x|\le R\\
\left(\frac{\ln(k-1)R- \ln|x|}{\ln(k-1)}\right)^+   & |x|\ge R
\end{cases}
\]
with $k>2$ sufficiently large.

Finally, if $N\ge3$, let $\delta>0$ be small to be fixed, and
\[
t^*\varphi(x)=
\begin{cases}
1 & |x|\le R\\
((1+\delta)R^{N-2}|x|^{2-N}-\delta)^+   & |x|\ge R.
\end{cases}
\]
Letting $|\partial B_1|=\omega_N$ we obtain
\[
\begin{split}
\int_{\R^N}(|\D \varphi|^2-V(x)\varphi^2)dx &\le
\int_{\R^N\setminus B_R}|\D \varphi|^2\,dx - \int_{B_R}\eta\varphi^2\,dx \\
&\le (1+\delta)^2R^{2(N-2)}(N-2)^2 \omega_N\int_R^{+\infty} r^{2(1-N)}\cdot r^{N-1}\,dr - \frac{\omega_N}{N}
R^N\eta\\
&= \frac{R^{N-2}\omega_N}{N}\left( (1+\delta)^2 N(N-2) - R^2\eta\right) < 0
\end{split}
\]
by \eqref{eq:suff_cond_bottom}, if $\delta$ is sufficiently small.
\end{proof}
Now, let us prove that a local minimum solution exists.
By Proposition \ref{prop:exists_min}, we just need to show that, under the assumptions of
the theorem, $c_{V,\rho}<0$. Let $\varphi$ be as in \eqref{eq:V_neg_spect}. Notice that,
for every $t>0$,
\[
F(t\varphi) \le -\frac{t^p}{p}\|\varphi\|_p^p <0.
\]
Let $\bar t = \frac{R_*}{\|\nabla\varphi\|_2}$.
Then $\|\nabla \bar t \varphi\|_2 = R_*$ and $F(\bar t \varphi)<0$, hence
\eqref{eq:lem_rephrased} implies that
\[
\rho_* \le \|\bar t \varphi\|_2 = \frac{R_*}{\|\nabla\varphi\|_2}\rho,
\qquad\implies\qquad
\|\nabla\varphi\|_2 \le \frac{\rho}{\rho_*}R_* < R_*.
\]
Resuming, we have that both $\varphi \in S_\rho$ and
$\|\nabla\varphi\|_2 \le R_*$. By definition we infer
\[
c_{V,\rho} \le F(\varphi)<0
\]
and the theorem follows.
\qed

\subsection{Mountain pass solution}

The proof of the second part of Theorem \ref{Tmin}, i.e. the existence of a mountain pass
solution, can be obtained arguing as in the proof of Theorem \ref{T1}.
Some changes are in order, especially for the Palais-Smale condition, because in this framework
Lemma \ref{L>} cannot work. In this case, instead of working with a mountain pass geometry
uniform in $\rho$, as we did to find the minimizer, it is more convenient to use a mountain pass
geometry dependent on $\rho$; this will allow a direct comparison between the mountain pass level
and $m_\rho$, allowing to show that the Lagrange multipliers of the Palais-Smale sequence are
eventually positive.
  Observe that, by Remark \ref{rem:ass_SL}, \eqref{eq:ass_VW_con_rho} ensures that the assumptions of the Splitting Lemma  \ref{splitting-lemma} holds also for $r=+\infty$.

In the following, we assume that
$\sigma_i$, $\bar\sigma_i$,  $i=1,2$,
and $\tilde L$ in \eqref{eq:ass_VW_con_rho} are such that
\begin{align}
\label{eq:L1}
&\|V\|_{r} \cdot\rho^{(2-\frac{N}{r})\frac{2(p-2)}{N(p-2)-4}}\le  L_1,
&&\hspace{-5mm}
\|W\|_{s} \cdot\rho^{(1-\frac{N}{s})\frac{2(p-2)}{N(p-2)-4}}\le  L_1,
\\
\label{eq:L2}
&\|V\|_{r}\cdot\rho^{2 -\frac{N}{r}}\le  L_2,
&&\hspace{-5mm}
\|W\|_{s}\cdot\rho^{1-\frac{N}{s}} \le  L_2,
\\
\label{eq:L3}
&\|V\|_{r}\cdot\rho^{2 -\frac{N}{r}} \le L_3 m_\rho,
&&\hspace{-5mm}
\|W\|_{s}\cdot\rho^{1 -\frac{N}{s}} \le L_3 m_\rho,
\end{align}
for suitable positive constants $L_i$ to be chosen below independently of $V$,
$W$ and $\rho$ (this is possible in view of \eqref{eq:poho}). Moreover we notice that
\eqref{eq:L1} is equivalent to
\begin{equation}\label{eq:L1'}
\|V\|_{r}^{\frac{2r}{2r-N}} \le  L_1'\cdot\frac{m_\rho}{\rho^2},
\qquad
\|W\|_{s}^{\frac{2s}{s-N}} \le  L_1'\cdot\frac{m_\rho}{\rho^2}.
\end{equation}

\medskip

\emph{Mountain pass geometry - revisited}\medskip

We recall that by \eqref{1055} we have, for every $u\in S_\rho$,
\beq
\label{1250}
F(u)\ge \frac 12 \|\D u\|_2^2-\frac{G_q^2}{2}\|V\|_{\frac{q}{q-2}}\rho^{2-\frac{N(q-2)}{q}}\|\D u\|_2^{\frac{N(q-2)}{q}}
-\frac1p G^p\rho^{p-\gamma} \|\D u\|_2^\gamma,
\eeq
where, as usual, $q\in [2,2^*)$ satisfies $\frac{q}{q-2}=r$.
Let $\tilde R>0$ be such that
$$
\widetilde \cM_0:=\frac 12 \tilde R^2 -\frac1p G^p\rho^{p-\gamma} \tilde R^\gamma=\max_{t\ge 0}
\frac 12  t^2 -\frac1p G^p\rho^{p-\gamma} t^\gamma.
$$
Then $\tilde R = C \rho^{-\frac{p-\gamma}{\gamma-2}}$, where $C$ depends only on $N$ and $p$.
By a direct computation and \eqref{eq:poho} we have that $\widetilde \cM_0= 2 \widetilde M m_\rho$ for a suitable constant $\widetilde M>0$ independent on $\rho$.
Then we can choose $L_1>0$, only depending on $N$, $p$ and $q$, such that
\begin{equation}\label{eq:passaggio_L1}
\frac{G_q^2}{2}\|V\|_{\frac{q}{q-2}}\rho^{2-\frac{N(q-2)}{q}}\tilde R^{\frac{N(q-2)}{q}}
\le \widetilde M m_\rho
\qquad\iff\qquad
\|V\|_{r} \rho^{(2-\frac{N}{r})\frac{2(p-2)}{N(p-2)-4}}\le  L_1.
\end{equation}
By \eqref{1250}, \eqref{eq:passaggio_L1} and \eqref{eq:L1} we obtain
\beq
\label{1248}
 \widetilde \cM:=\frac 12 \tilde R^2 -\frac{G_q^2}{2}\|V\|_{\frac{q}{q-2}}\rho^{2-\frac{N(q-2)}{q}}\tilde R^{\frac{N(q-2)}{q}}
-  \frac1p G^p\rho^{p-\gamma} \tilde R^\gamma\ge \widetilde M m_\rho.
\eeq
Now, let us fix $u_0=(Z_\rho)_{h_0}, u_1=(Z_\rho)_{h_1}\in S_\rho$ such that
$$
\|\D u_0\|_2<\tilde R,\quad \|\D u_1\|_2>\tilde R,\quad F(u_0)<\widetilde \cM,\qquad F(u_1)<0,
$$
and define the mountain pass value
$$
  m_{V,\rho}:=\inf_{\xi\in\Gamma}\max_{t\in [0,1]}F(\xi(t)),\qquad
\Gamma=\{\xi:[0,1]\to S_\rho\ :\ \xi(0)=u_0,\
\xi(1)=u_1\}.
$$
By \eqref{1248} and \eqref{eq:base_ass} we infer
\beq
\label{1251}
\widetilde M m_\rho \le m_{V,\rho} < m_{\rho}
\eeq
(the strict inequality follows as in Remark \ref{mv-mr lemma}).
As in Step 1 of Proposition \ref{bounded PS}, we get a Palais-Smale sequence $(v_n)_n$, at the level $m_{V,\rho}$, that satisfies \eqref{Pohozaev-id}, \eqref{1731} and we have to verify that it is bounded,  the related Lagrange multipliers $(\lambda_n)_n$ are bounded and converge, up to a subsequence, to a positive value.\medskip

\emph{Bounded Palais-Smale sequence}\medskip

The proof of this step goes on as the proof of  Proposition \ref{bounded PS}  untill \eqref{a_n-bd}, with analogous notation.
In this framework, by Lemma \ref{stime_q V W} we have
\beq
\label{1733}
\begin{split}
[N(p-2)-4]a_n  \le &\,   2N(p-2)  m_{V,\rho}+N|4-p|c_n+4|d_n|+o(1)\\
 \le & \, 2N(p-2) m_{\rho}+N|4-p|\left(G_q^2\rho^{2 -\frac{N}{r}}\|V\|_{r}\right)a_n^{
 \frac{N}{2r}} \\
 &\, +4\left(G_{q_1}\|W\|_{s}\rho^{1-\frac{N}{s}}\right)a_n^{\frac12\left(1+\frac{N}{s}\right)}+o(1)
\end{split}
\eeq
where $q_1\in [2,2^*)$ satisfies $\frac{2 q_1}{q_1-2}=s$.
By assumption,
\beq
\label{1647}
\frac{N}{2r}<1\quad\mbox{ and }\quad \frac12\left(1+\frac{N}{s}\right)<1,
\eeq
hence when $a_n\ge 1$
\beq
\label{1734}
\left\{[N(p-2)-4]-N|4-p|  G_q^2\rho^{2 -\frac{N}{r}}\|V\|_{r} -
4  G_{q_1}\rho^{1-\frac{N}{s}}  \|W\|_{s}  \right\}a_n
\le 2N(p-2)m_{\rho}+o(1).
\eeq
Then we can choose $L_{ 2}$ in \eqref{eq:L2} small, in such a way that
\beq
\label{1644}
a_n\le \max\left\{1,\frac{3N(p-2)}{N(p-2)-4}m_\rho\right\},
\eeq
and in particular the sequence $a_n$ is bounded. We deduce that the
sequences $b_n,c_n, d_n$ and $\lambda_n$ are bounded as well, and they all converge,
up to subsequences, to suitable $a,b,c,d$ and $\lambda$, respectively.
Then we focus on the sign of the Lagrange multiplier $\lambda$.
\medskip

\emph{Lower bound for the Lagrange multiplier}\medskip

As in \eqref{lambda>0}, by  \eqref{1251} and the estimates of Lemma \ref{stime_q V W}
we obtain
\beq
\label{1640}
\begin{split}
\la\rho^2 &=  \frac{2N-(N-2)p}{N(p-2)-4}2m_{V,\rho}
-\frac{(N-2)(p-2)}{N(p-2)-4}c
-\frac{2(p-2)}{N(p-2)-4}d\\
&\ge
 C_1 \cdot 2\widetilde M m_\rho - C_2 \rho^{2 -\frac{N}{r}}\|V\|_{r}a^{\frac{N}{2r}} - C_3 \rho^{1-\frac{N}{s}}\|W\|_{s}a^{\frac12\left(1+\frac{N}{s}\right)},
\end{split}
\end{equation}
where the nonnegative constants $C_i$ only depend on $N,p,q,q_1$.
Now, if $a\ge1$, then  \eqref{1644} implies that $C m_\rho\ge 1$ too.
Then \eqref{1640}, \eqref{1647} and \eqref{1644} imply
\[
\begin{split}
\la\rho^2 &\ge \left[ C_1' - C_2' \rho^{2 -\frac{N}{r}}\|V\|_{r}
-C_3'\rho^{1-\frac{N}{s}}\|W\|_{s}\right]\, m_\rho,
\end{split}
\]
and using again \eqref{eq:L2}, with a possibly smaller value of $L_{ 2}$, we
infer that $\lambda>0$. If instead $a\le 1$, then we can use \eqref{eq:L3} to
write \eqref{1640} as
\[
\la\rho^2
\ge
 C_1 \cdot 2\widetilde M m_\rho - C_2 \rho^{2 -\frac{N}{r}}\|V\|_{r}
 - C_3 \rho^{1-\frac{N}{s}}\|W\|_{s}
 = \left[ C_1 \cdot 2\widetilde M - (C_2+C_3)L_3\right]\, m_\rho,
\]
and also in this case $\lambda>0$, provided $L_3$ in \eqref{eq:L3} is chosen sufficiently
small.
\medskip

\emph{Palais-Smale condition}\medskip

Up to now, we have shown that the Palais-Smale sequence $(v_n)_n$ provided by the mountain pass geometry and  the corresponding sequence of Lagrange multipliers $(\lambda_n)_n$ satisfy:
$$
F(v_n)\to m_{V,\rho},\qquad F'(v_n)\phi=\lambda_n\phi+o(1)\|\phi\|_{H^1},\qquad \lambda_n\to\lambda>0.
$$
Then $(v_n)_n$ is a Palais-Smale sequence also for $I_\lambda$ and by the Splitting Lemma we can write
\[
v_n(x)=v(x)+\sum_{i=1}^kw^i(x-y_n^i)+o(1)\quad\mbox{ in } H^1(\R^N)
\]
where each $w^i\in H^1(\R^N)$ satisfies $-\Delta w+\lambda w=|w|^{p-2}w$, for $i\in\{1,\ldots,k\}$, and  the weak limit $v$  satisfies $-\Delta w+(\lambda-V) w=|w|^{p-2}w$.
Arguing as in \eqref{splitt eq2}  we get
\beq
\label{1451}
m_{V,\rho}=F(v)+\sum_{i=1}^k F_\infty (w^i).
\eeq
Let us assume by contradiction
that $v_n\not\to v$ strongly in $H^1$, that is $k>0$. Then we denote
$$
\mu=\|v\|_2,\quad \alpha_i=\|w^i\|_2,
\qquad\text{ so that } \mu^2 + \sum_{i=1}^k \alpha_i^2 = \rho^2.
$$
Now, by Lemma \ref{lem:append} and \eqref{eq:poho},
\begin{equation}\label{eq:F(w)}
\sum_{i=1}^k F_\infty (w^i)\ge \sum_{i=1}^k m_{\|w_i\|_2} \ge m_{\alpha_1} =
m_\rho \left(\frac{\alpha_1}{\rho}\right)^{-2\theta},
\qquad
\theta:=\frac{2N-p(N-2)}{N(p-2)-4}>0.
\end{equation}
We claim that, taking into account \eqref{eq:L1'}, we have:
\begin{equation}\label{eq:claim_F(v)}
F(v) \ge - \theta m_\rho \left(\frac{\mu}{\rho}\right)^2.
\end{equation}
If this is the case, using  \eqref{eq:F(w)} and the fact that $\mu^2 + \alpha_1^2\le \rho^2$,
we obtain
\beq
\label{1558}
\begin{split}
F(v)+\sum_{i=1}^k F_\infty(w_i)&\ge  - \theta m_\rho \left(\frac{\mu
}{\rho}\right)^2 + m_\rho \left(\frac{\alpha_1}{\rho}\right)^{-2\theta}
\ge  m_\rho\left[ - \theta + \theta\left(\frac{\alpha_1}{\rho}\right)^2 +
\left(\frac{\alpha_1}{\rho}\right)^{-2\theta}\right]
\\
& \ge m_\rho \min_{0<x\le1}\left[ - \theta + \theta x +  x^{-\theta}\right] = m_\rho,
\end{split}
\eeq
which completes the proof of the compactness, because it is in contradiction
with \eqref{1451} and the fact that $m_{V,\rho}<m_\rho$ by \eqref{1251}.

To conclude, we are left to show \eqref{eq:claim_F(v)}. To this aim, let us set
$a=\|\D v\|_2^2$, $b=\|v\|_p^p$, $c=\int_{\R^N}V(x)v^2dx$, $d=\int_{\R^N}V(x)v\D v\cdot x\, dx$.
Taking into account the Pohozaev identity for $v$:
\[
a-\frac{N(p-2)}{2p}b-\frac{N}{2}c-d=0,
\]
we infer
\begin{equation}\label{eq:F(v)_ini}
\begin{split}
F(v)&=\frac12 a- \frac12 c-\frac{1}{p}b \ge
\frac{N (p - 2) - 4}{2 N (p-2)}a
-\frac{p - 4}{2  (p-2)}c
+\frac{2}{ N (p-2)}d\\
&
\ge C_0\left[2a - C_1 c - C_2 |d|\right],
\end{split}
\end{equation}
where the nonnegative constants $C_i$ only depend on $N$ and $p$. Now, using
Lemma \ref{stime_q V W}, we have that
\begin{equation}\label{eq:F(v)_1}
a - C_1 c \ge a -C_1'\|V\|_{r}\mu^{2-\frac{N}{r}}a^{\frac{N}{2r}}
\ge - C_1'' \|V\|_{r}^{\frac{2r}{2r-N}}\mu^2,
\end{equation}
where in the last step we used the elementary inequality
\[
a>0,\ 0<\tau<1,\ k>0
\qquad\implies\qquad
a-ka^\tau \ge -(1-\tau)\tau^{\frac{\tau}{1-\tau}}\, k^{\frac{1}{1-\tau}}.
\]
Analogously,
\begin{equation}\label{eq:F(v)_2}
a - C_2 |d| \ge a -C_2'\|W\|_{s}\mu^{1-\frac{N}{s}}a^{\frac12\left(1+\frac{N}{s}\right)}
\ge - C_2'' \|W\|_{s}^{\frac{2s}{s-N}}\mu^2.
\end{equation}
Substituting \eqref{eq:F(v)_1} and  \eqref{eq:F(v)_2} into  \eqref{eq:F(v)_ini},
and exploiting assumption \eqref{eq:L1'} we finally obtain
\begin{equation*}
F(v) \ge -C L'_1 m_\rho \frac{\mu^2}{\rho^2},
\end{equation*}
and \eqref{eq:claim_F(v)} follows by taking $L_1'$ sufficiently small.

\qed


{\small {\bf Acknowledgement}. The authors have been supported by  the INdAM-GNAMPA group.
R.M. acknowledges also the MIUR Excellence Department
Project awarded to the Department of Mathematics, University of Rome
Tor Vergata, CUP E83C18000100006. G.R. has been supported by
the Italian PRIN Research Project 2017 ``Qualitative and quantitative aspects of nonlinear PDE''.
G.V. is partially supported by the project Vain-Hopes within the program VALERE --
Universit\`a degli Studi della Campania ``Luigi Vanvitelli'' and by the Portuguese
government through FCT/Portugal under the project PTDC/MAT-PUR/1788/2020.
}



\end{document}